\newtheorem{theorem}{Theorem}
\newtheorem{corollary}[theorem]{Corollary}
\newtheorem{definition}[theorem]{Definition}
\newtheorem{lemma}[theorem]{Lemma}
\newtheorem{proposition}[theorem]{Proposition}
\newtheorem{claim}[theorem]{Claim}
\theoremstyle{remark}
\newtheorem{remark}[theorem]{Remark}
\newtheorem{example}[theorem]{Example}
 \newcommand{\eps}{\varepsilon}
 \renewcommand{\phi}{\varphi}
 \newcommand{\OB}{Ob{\l}{\'o}j} 
\newcommand{\HL}{Henry-Labord{\`e}re}
\renewcommand{\Cap}{\bigcap}
\newcommand{\E}{\mathbb{E}}
\renewcommand{\P}{\mathbb{P}}
\newcommand{\N}{\mathbb{N}}
\newcommand{\R}{\mathbb{R}}
\newcommand{\be}{\begin{equation}}
\newcommand{\ee}{\end{equation}}
\definecolor{darkviolet}{rgb}{0.58, 0.0, 0.83}
\DeclareMathOperator{\supp}{supp}
\numberwithin{equation}{section}
\numberwithin{theorem}{section}
\begin{document}

\begin{frontmatter}

\title{Optimal Brownian stopping when the source and target  
are radially symmetric distributions
} 

\runtitle{Optimal Brownian stopping}

\thankstext{}{The  first two  
authors are partially supported by  the 
Natural Sciences and Engineering Research Council of Canada (NSERC). Y.-H. Kim was also supported by an Alfred P. Sloan Research Fellowship. T. Lim was partly supported by a doctoral graduate fellowship from the University of British Columbia, by the Austrian Science Foundation (FWF) through grant Y782, and by the European Research Council under the European Union's Seventh Framework Programme (FP7/2007-2013) / ERC grant agreement no. 335421. In addition, T. Lim gratefully acknowledges support from ShanghaiTech University. Part of this research was done while the authors  were visiting the Fields institute in Toronto during the thematic program on ``Calculus of Variations'' in Fall 2014. We are thankful for the hospitality and the great research environment that the institute provided. 
}

\begin{aug}
\author{\fnms{Nassif} \snm{Ghoussoub}\thanksref{m1}
\ead[label=e1]{nassif@math.ubc.ca}}
\author{\fnms{Young-Heon} \snm{Kim}\thanksref{m1}
\ead[label=e2]{yhkim@math.ubc.ca,}}
\and
\author{\fnms{Tongseok} \snm{Lim}\thanksref{m2}
\ead[label=e3]{TLIM@shanghaitech.edu.cn}}
%\ead[label=u1,url]{http://www.foo.com}}

%\thankstext{t1}{Some comment}
%\thankstext{t2}{First supporter of the project}
%\thankstext{t3}{Second supporter of the project}
\runauthor{N. Ghoussoub, Y-H Kim and T. Lim}

\affiliation{The University of British Columbia\thanksmark{m1} and ShanghaiTech University \thanksmark{m2}}

\address{Department of Mathematics\\University of British Columbia\\ Vancouver, V6T 1Z2 Canada\\
\printead{e1}\\
\phantom{E-mail:\ }\printead*{e2}}

\address{Institute of Mathematical Sciences\\ShanghaiTech University\\
No. 393, Huaxia Middle Road, Pudong New Area, Shanghai 201210\\
\printead{e3}}
%\printead{u1}}
\end{aug}

\begin{abstract}
Given two probability measures $\mu, \nu$ on $\R^d$, {\em in subharmonic order}, we describe optimal stopping times $\tau$ that  maximize/minimize the cost functional $\E |B_0 - B_\tau|^{\alpha}$, $\alpha > 0$, where $(B_t)_t$ is Brownian motion with initial law $\mu$ and with final distribution --once stopped at $\tau$--  equal to $\nu$. Under the assumption of radial symmetry  on $\mu$ and $\nu$, we show that in dimension $d \geq 3$ and $\alpha \neq 2$, there exists a unique optimal solution given by a non-randomized stopping time characterized as the hitting time to a suitably symmetric {\em barrier}.  We also relate  this problem to the optimal transportation problem for {\em subharmonic martingales}, and establish a duality result. This paper is an expanded version of a previously posted but not published work by the authors \cite{GKL2}.

\end{abstract}

\begin{keyword}[class=MSC]
\kwd[Primary ]{49-XX}
\kwd{60-XX}
\kwd[; secondary ]{52-XX}
\end{keyword}

\begin{keyword}
\kwd{Subharmonic Martingale Optimal Transport, Skorokhod Embedding, Monotonicity, Radial Symmetry.}
%\kwd{\LaTeXe}
\end{keyword}
\tableofcontents
\end{frontmatter}

%\begin{abstract}
%\end{abstract}

%\begin{keyword}[class=MSC]
%\kwd[Primary ]{}
%\kwd{}
%\kwd[; secondary ]{}
%\end{keyword}

%\begin{keyword}
%\kwd{}
%\kwd{}
%\end{keyword}

%\end{frontmatter}

\section{Introduction}\label{intro}  

Let  $\mu$ and $\nu$ be two probability measures on $\R^d$, $d \geq 2$ with finite first moment, and let $(B_t)_t$ denote the  Brownian motion with initial law $\mu$.  
We consider the following --possibly empty-- set of stopping times, with respect to the Brownian filtration:
\begin{align*} 
{\cal T}(\mu, \nu) = \{\tau \,|\, \text{$\tau$ is a stopping time, } B_0 \sim \mu, B_\tau \sim \nu, \text{ and } \E[\tau] < \infty \}, 
\end{align*} 
where here and in the sequel, the notation $X\sim \lambda$ means that the law of the random variable $X$ is the probability measure $\lambda$.  

For a given cost function $c: \R^d \times \R^d \to \R$, and assuming ${\cal T}(\mu, \nu)$ is non-empty, we shall consider  the following  optimization problem
\begin{align}\label{opt}
 \text{ Maximize / Minimize }\ \E\, [c(B_0, B_\tau)] \quad \text{over} \quad \tau \in {\cal T}(\mu, \nu).
\end{align}

Recall that a {\em stopping time} on a filtered probability space $(\Omega, \mathcal F, (\mathcal F_t)_t,\mathbb P)$ is a random variable $\tau :\Omega \to [0, +\infty]$ such that $\{\tau \leq t\}\in \mathcal F_t$ for every $t\geq 0$. A {\em randomized stopping time} is a probability measure $\tau$ on $\Omega \times [0, +\infty]$ such that for each $u \in \R_+$, 
the random time $\rho_u (\omega) :=\inf\{ t\ge 0: \tau_{\omega} ([0,t]) \ge u\}
$  is a stopping time, where  $(\tau_{\omega})_\omega$ is a disintegration of $\tau$ along the path ${\omega}$ according to $\P$, that is  $\tau(d\omega,  dt) = \tau_{\omega} (dt) \P(d\omega)$. In the sequel, ``stopping times" including those in ${\cal T}(\mu, \nu)$, will mean possibly randomized stopping times unless stated otherwise. We note that a stopping time is {\em non-randomized} if the disintegration $\tau_{\omega}$ is the Dirac measure on $\R_+$ for $\P$ a.e. $\omega$. For further details See Section \ref{OSEP}, and also e.g. Beiglb{\"o}ck-Cox-Huesmann \cite{bch}, Guo-Tan-Touzi \cite{GTT}.

The purpose of the present article is to identify and characterize those  optimal stopping times for which (\ref{opt}) is attained. In particular, we will be investigating when optimal stopping times are unique, `true' as opposed to randomized, and whether they are hitting times of some barriers. Moreover, we consider how conditions of radial symmetry on the source and target measures are reflected in terms of symmetry in the geometry of the barrier. 
%, and are therefore unique.  
 
But first, we recall that the Skorokhod Embedding Problem  (SEP) -initiated by Skorokhod \cite{Sk65}  in the early 1960s- gives necessary and sufficient condition on a pair $(\mu, \nu)$, which insures that the set ${\cal T}(\mu, \nu)$ is non-empty.  In dimension one, this condition states that $\mu$ and $\nu$ should be in {\em convex order}, that is 
\begin{equation}
\hbox{$\int f d\mu \le \int f d\nu$ for every convex function $f$ on $\R^d$.}
\end{equation}
Since then, the problem and its variants were investigated by a large number of researchers, and have  led to several important results in  
probability theory and stochastic processes. We refer to {\OB} \cite{Ob04} for an excellent survey of the subject, which describes no less than 
 $21$ different solutions to (SEP). 
 
  More recently,  Hobson \cite{Ho98} made the connection between SEP and problems of robust pricing and hedging of financial instruments.  Hobson-Klimmek \cite{HoKl12} and Hobson-Neuberger \cite{HoNe11} connected SEP to finding robust price bounds for the forward starting straddle. See the excellent survey  of Hobson  \cite{Ho11}. 
 
The interest eventually shifted to the problem of finding optimal solutions among those that solve (SEP). In other words, which among the stopping times in  ${\cal T}(\mu, \nu)$ maximize or minimize a given cost function. 
 Among the multitude of contributions to these questions, we point out the papers of Beiglb{\"o}ck-Cox-Huesmann \cite{bch}, Cox-\OB-Touzi \cite{COT}, Dolinsky-Soner \cite{ds1, ds2}, Guo-Tan-Touzi \cite{GTT}, K{\"a}llblad-Tan-Touzi \cite{KTT}, to name just a few.
We do single out, however, the recent work of Beiglb{\"o}ck-Cox-Huesmann \cite{bch}, which uses the analogy between the optimal SEP and the theory of optimal mass transport, 
to identify and prove the so-called {\em Monotonicity Principle} (MP), which will be one of the main tools used in this paper.

 Most of the previously mentioned works focus on the one-dimensional case.  More recently, Ghoussoub-Kim-Palmer  use {dynamic programming and} PDE methods to give  solutions, {that is, optimal non-randomized stopping times},  in all dimensions for the minimization case when the cost is either $c(x, y)=|x-y|$ or when it is dynamic and given by a {certain} Lagrangian  \cite{GKP-Monge, GKP-Lagrange}.  Left open is the case of a general cost, including those of the form
\begin{align}\label{shcost}
c(x,y) = |x-y|^\alpha,
\end{align} 
where $0 < \alpha \neq 2$, or more generally cost functions of the form 
%\begin{align*} 
$ c(x,y) = f(|x-y|),$
where $f : \R_+ \to \R$ is a continuous function such that $f(0)=0$. 

 The present  paper also deals with %the somewhat related {\it optimal Skorokhod embedding problems} (OSEP) in 
higher dimensional situation, but with a focus on the case where both source and target measures are radially symmetric. 
{One of the main goals is to analyze how this symmetry impacts the structure of the barrier sets, whose hitting times determine the optimal stopping times. }
To state our first main result, we shall denote by $\P_c(O)$ the set of compactly supported probability measures whose supports are in a given open set $O$ of $\R^d$. 
 \begin{theorem}\label{thm:main} Consider the cost function $c(x,y) = |x-y|^\alpha$, where $0 < \alpha \neq 2$, and 
let $\mu, \nu \in \P_{c}(\R^d)$ be radial\footnote{A measure $\mu$ is radial if $\mu(A) = \mu(M(A))$ for any $A \in \mathcal{B}(\R^d)$ and any orthogonal matrix $M$. An example is the Gaussian measure with mean $0$ and covariance matrix $\sigma^2\,{\rm Id}$, $\sigma >0$.}, $d \ge 3$ are such that ${\cal T}(\mu, \nu)$ is nonempty.  Assume  $\mu \in H^{-1}(\R^d)$, $\mu \wedge \nu =0$,  $\mu(\{0\}) =0$, and $\mu (\partial \supp \mu)=0$.

Then, there exists a unique solution  $\tau$ for the   
 minimization/maximization problem 
\eqref{opt}, that   
 is given by a non-randomized stopping time. 
 
 Moreover, in the minimization problem with $0 < \alpha \le 1$, the solution $\tau$ is still unique also among all possibly randomized solutions, and without the assumption that $\mu \wedge \nu =0$ 
\end{theorem}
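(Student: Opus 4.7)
My plan is to establish existence by compactness, apply the Beiglb{\"o}ck--Cox--Huesmann Monotonicity Principle to reduce any optimizer to the first hitting time of a space-time barrier, then use the $O(d)$-invariance of the data to force the barrier to be radial, and finally invoke a one-dimensional uniqueness argument on the Bessel process $|B_t|$. Existence follows from tightness of the set of randomized stopping times with $\E[\tau]$ uniformly bounded (a standard potential-theoretic bound available under $\mu\in H^{-1}(\R^d)$ together with the subharmonic order implicit in $\mathcal{T}(\mu,\nu)\ne\emptyset$), combined with continuity of $\tau\mapsto \E c(B_0,B_\tau)$ in the stable topology, since $\mu,\nu$ are compactly supported and $c$ is continuous.

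Applying the Monotonicity Principle to any optimizer $\tau^*$ yields a $c$-monotone ``stop/go'' set $\Gamma$ supporting $\tau^*$. For the cost $c(x,y)=|x-y|^\alpha$ with $\alpha\neq 2$, a direct computation of the swap inequality produces a strict sign that depends only on whether $\alpha<2$ or $\alpha>2$; this converts $\tau^*$ into the first hitting time of a space-time barrier $R\subset[0,\infty)\times\R^d$, of Root type in one regime and of Rost type in the other, with the roles interchanged between the min and max problems. The value $\alpha=2$ must be excluded because by It{\^o}'s formula every $\tau\in\mathcal{T}(\mu,\nu)$ produces the same cost, degenerating the optimization.

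Because $\mu$, $\nu$, and $c$ are $O(d)$-invariant, any orthogonal image of an optimizer is again optimal; coupled with the rigidity of the barrier description (a barrier is determined up to nullsets by the laws it embeds), this forces $R$ to be $O(d)$-symmetric, so that $\tau^*$ depends on $B$ only through $|B|$. In $d\ge 3$, $|B_t|$ is a transient Bessel process, and the problem reduces to a one-dimensional optimal Skorokhod embedding between the radial marginals $\mu_r,\nu_r$. The hypotheses $\mu(\{0\})=0$ and $\mu(\partial\supp\mu)=0$ avoid the singular point of the Bessel process at the origin and preclude boundary atoms that would obstruct uniqueness; classical Root/Rost uniqueness on $[0,\infty)$ then yields a unique non-randomized optimizer.

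For minimization with $0<\alpha\le 1$, the strict subadditivity $(a+b)^\alpha<a^\alpha+b^\alpha$ forces the overlap $\mu\wedge\nu$ to be stopped at time $0$ (absorbing the disjointness hypothesis) and, combined with a strict-monotonicity refinement of the MP, rules out all genuine randomization, yielding uniqueness among all (possibly randomized) stopping times. \emph{The principal obstacle} I foresee is the radial reduction step itself: the Brownian filtration is strictly larger than that of $|B_t|$, so an optimizer could a priori encode angular information. Ruling this out requires a symmetrization/averaging argument showing that any angularly dependent $\tau^*$ can be replaced by a radial stopping time of no worse cost, and that strict improvement occurs whenever the angular dependence is nontrivial---a step that crucially relies on the strict sign of the swap inequality provided by $\alpha\neq 2$.
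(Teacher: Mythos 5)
The central step of your argument --- forcing the barrier to be $O(d)$-symmetric so that the optimizer factors through the Bessel process $|B_t|$ --- is not just unproved but false for this problem. The cost $|x-y|^\alpha$ couples the terminal point $y$ to the starting point $x$ through the angle between them, not merely through $|x|$ and $|y|$, so the optimal stopping rule must retain angular information. What is actually true (and what the paper proves) is that for each starting point $x$ the optimizer is the hitting time of a union of spherical caps $U_x^y=\{z:|z|=|y|,\ \langle x,z\rangle>\langle x,y\rangle\}$, a barrier that is only \emph{axially} symmetric about the line through $0$ and $x$, and is merely equivariant under rotations ($M(\mathcal{U}_x)=\mathcal{U}_{Mx}$), not invariant. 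Rotational invariance of $\mu,\nu,c$ only tells you that rotated optimizers are again optimal; to conclude the barrier itself is rotation-invariant you would already need the uniqueness you are trying to prove, and even then you would only obtain equivariance. Your closing paragraph correctly identifies this as the principal obstacle, but the proposed resolution (strict improvement under angular symmetrization) would establish a false statement. Relatedly, the claim that the swap inequality for $|x-y|^\alpha$ yields a Root/Rost \emph{space-time} barrier is unsupported: those structures arise for time-dependent costs, whereas here the barrier is purely spatial and varies with the starting point.

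The paper's route is genuinely different. It first proves a \emph{radial} monotonicity principle (Proposition~\ref{rmonoprin}), which permits swaps of the continuing mass $\psi_y$ with \emph{any} $R$-equivalent measure $\phi_{y'}\in{\rm SH}(y')$ with $|y|=|y'|$ --- a strictly stronger tool than the plain BCH transpose swap, and one whose proof goes through dual attainment rather than symmetrization. The role of $\alpha\neq 2$ is then captured by the variational Lemma~\ref{key}: the derivative of the cost, $z\mapsto -\alpha|z|^{\alpha-2}z_d$, is strictly super- or subharmonic on a half-space according as $\alpha<2$ or $\alpha>2$, which makes the gain of diffusing $\delta_y$ strictly monotone as $x$ moves along the sphere $|x|=r$ away from $y$. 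This excludes ``forbidden pairs'' relative to the caps $U_x^y$, and non-randomization follows because a particle continuing from $y\in\partial U_x^y$ must cross the $(d-1)$-dimensional cap before stopping. Note also that $d\ge 3$ is used only to guarantee that $B^x$ almost surely never stops at a point parallel to $x$ (Brownian motion does not hit line segments in dimension at least $3$), not for transience of $|B_t|$; in $d=2$ the argument breaks down precisely at that point.
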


{Actually, we shall show that 
the optimal $\tau$ is a hitting time of a suitable barrier. Moreover, 
each barrier set for the Brownian motion $B^x$ starting from $x$  is axially symmetric around the line that connects the origin and $x$ (see Remark~\ref{rmk:barrier}).
}
{
A similar structure is detected in the two dimensional case, however, it was not sufficient to yield a non-randomized stopping time (see Remark~\ref{rmk:barrier-2d}). 
}

For the rest of  this introduction, we do not assume that $\mu,\nu$ are necessarily radial, but we shall need the notion of measures in subharmonic order. Recall that 
a function $f$ is subharmonic on an open set $U$ in $\R^d$ if it is upper-semicontinuous with values on $\R \cup \{-\infty\}$ and satisfies 
 $$f(x) \le \frac{1}{{\rm Leb}(B)}\int_B f(y)dy \quad \text{for every closed ball $B$ in $U$ with center at $x$}.$$
When $f$ is $C^2$, the latter condition is equivalent to $\Delta f \ge 0$ on $U$, where $\Delta=\sum_{i=1}^d \frac{\partial}{\partial x_i}$ is the Laplacian. We note that, unlike convex functions, a subharmonic function on $U$ may not have any subharmonic extension on a larger domain $V$. Therefore it is important to specify a subharmonic function with its domain. We shall therefore consider ${\cal SH}(O)$ to be the cone of all subharmonic functions on an open set $O$.
 \begin{definition}\label{subharmonicorder}
 We say that $\mu,\nu \in \P_c(\R^d)$ are in subharmonic order if the following holds:
 \begin{equation}\label{order.0}
\int f \,d\mu \le \int f \,d\nu \quad \hbox{for every} \quad f \in {\cal SH}(O),
\end{equation}
where $O$ is an open set containing $\supp (\mu+\nu)$.  
 We shall then write $\mu \prec_s \nu$.  
 \end{definition}
The fact that in higher dimension, the condition $\mu \prec_s \nu$ is also sufficient for the non-emptiness of ${\cal T}(\mu, \nu)$ has been the subject of many studies. See for example Rost \cite{Ro71}, Monroe \cite{Mo72},  Baxter-Chacon \cite{BaCh74}, Chacon-Walsh \cite{ChWa76}, Falkner \cite{Fa80} and many others.
We shall give yet another proof of this fact from a different viewpoint, and present the intimate connection between {\em the optimal  Skorokhod embedding problem} (OSEP) and {\em the subharmonic martingale optimal transport {\rm (SMOT)} problem} that we  now introduce. 

For this, recall the {\em  martingale optimal transport {\rm (MOT)}} problem. Let $\Pi(\mu,\nu)$ be the set of transport plans with {\em marginals} $\mu,\nu$, i.e. the subset of $\P(\R^d \times \R^d)$ consisting of all couplings of $\mu,\nu$ (see \cite{Vi03}). Then the MOT problem consists of the following:
\begin{align}\label{eq:MT problem}
\mbox{Minimize}\quad \text{Cost}[\pi] = \iint c(x,y)d\pi(x,y)\quad\mbox{over}\quad \pi\in {\rm MT}(\mu,\nu),
\end{align}
where MT$(\mu,\nu)$  is the set of {\it martingale transport plans,} that is, the subset of $\Pi(\mu,\nu)$ such that for each  $\pi\in$ MT$(\mu,\nu)$, its disintegration $(\pi_x)_{x\in \R^d}$  w.r.t. $\mu$ satisfies 
\begin{align}\label{eq:Jensen}
f(x) \le \int f(y)\,d\pi_x (y)
\end{align} 
for any convex function $f$ on $\R^d$. In other words, $x$ is the {\em barycenter} of $\pi_x$, $\mu$-a.e. $x$.

Problem \eqref{eq:MT problem} has been extensively studied, especially in one dimension by Beiglb{\"o}ck-Juillet \cite{BeJu16}, Beiglb{\"o}ck-Nutz-Touzi \cite{BeNuTo17}  and more recently in higher dimension by Ghoussoub-Kim-Lim  \cite{GKL2}.  For more recent developments,  see De March-Touzi \cite{DeTo17} and \OB-Siorpaes \cite{ObSi17}.
 In our situation, we need to consider the class of {\em subharmonic martingale transport plans on $O$}, that is the class 
 ${\rm SMT}_O(\mu,\nu)$  of those $\pi \in {\rm MT}(\mu, \nu)$ such that $\supp (\mu+\nu) \subset O$ and the inequality \eqref{eq:Jensen} also holds for every subharmonic function $f$ on $O$. This leads us to the {\em subharmonic  martingale optimal transport problem} (on the domain $O$): 
\begin{align}\label{eq:SMT_O problem}\mbox{Minimize}\quad \text{Cost}[\pi] = \iint_{O\times O}c(x,y)d\pi(x,y)\quad\mbox{over}\quad \pi\in {\rm SMT}_O(\mu,\nu).
\end{align}
Since every convex function is subharmonic, ${\rm SMT}_O(\mu,\nu) \subset {\rm MT}(\mu,\nu)$, and for $d=1$ these two sets are the same. However, for $d\ge 2$, the inclusion is strict and  problems \eqref{eq:MT problem} and \eqref{eq:SMT_O problem} are different. The following standard example illustrates this difference, and also the importance of the choice of the domain $O$ in the SMOT problem.
\begin{example}
Let $B_r$ be the open disk centered at $0$ with radius $r$ in $\R^2$, and let $S_r= \partial B_r$ be its boundary. Let $\rho_r$ be the uniform probability measure on $S_r$.\\
As a  first example, let $\mu=\rho_1$ and $\nu= \frac{1}{2}(\delta_0 + \rho_2)$. Then clearly ${\rm MT}(\mu,\nu) \neq \emptyset$.
On the other hand, if we consider the subharmonic function $g(z) = \log |z|$,
 then
\begin{align*}
\int g(z) \,d\mu(z) =0 > - \infty = \int g(z)\, d\nu(z),
\end{align*}
which implies that ${\rm SMT}_{\R^d}(\mu,\nu) = \emptyset$, since otherwise for any $\pi \in {\rm SMT}_{\R^d}(\mu,\nu)$,
$$\int g(y) d\nu(y) = \int\Big(\int g(y) d\pi_x(y)\Big)d\mu(x) \ge \int g(x) d\mu(x).$$

Another illustrative example consists of taking $\mu=\rho_2$ and $\nu=\rho_3$. Letting $U=\{x \ | \ 1<|x|<4 \}$ be an annulus and  $V=B_4$, then  ${\rm SMT}_{V}(\mu,\nu) \neq \emptyset$ but ${\rm SMT}_{U}(\mu,\nu) = \emptyset$. One way to see this is by considering a Brownian motion $B_t$ with $B_0 \sim \mu$, and letting $\tau_{B_3}$ be the first exit time of $B_t$ from $B_3$. Then for any $f \in {\cal SH}(V)$, $f(B_{t \wedge \tau_{B_3}})$ is a submartingale, and hence the joint law of $(B_0, B_{\tau_{B_3}})$ is an element of ${\rm SMT}_{V}(\mu,\nu)$.
On the other hand, the function $h(z) = - \log |z|$ is superharmonic on $V$ and harmonic on $U$ and satisfies $\int h d\mu > \int h d\nu$, which means that ${\rm SMT}_{U}(\mu,\nu) = \emptyset$. In terms of why a stopping time in  ${\cal T}(\mu, \nu)$   is lacking, it is because 
since $B_0 \sim \mu$, there is a positive probability that $B_t$ hits the boundary $\partial U$ before hitting $S_3$.
\end{example}
The above example shows that, unlike the MOT problem, the SMOT problem is domain sensitive, which leads  to the following natural question. Given $\mu,\nu \in \P_c(\R^d)$,  for which domain $O$, does ${\rm SMT}_{\R^d}(\mu,\nu) \neq \emptyset$ also imply that ${\rm SMT}_{O}(\mu,\nu) \neq \emptyset$?\\
To provide an answer, we make the following definition.
\begin{definition}\label{regulardomain}
We say that an open set $O$ is a regular domain  for $\mu,\nu \in \P_c(\R^d)$ if there exists a compact set $K$ in $O$ such that $\supp(\mu+\nu) \subset K$ and $\R^d \setminus K$ is connected. %In this 
We denote by ${\cal R}(\mu,\nu)$ the set of all regular domains for  $\mu,\nu$. 
\end{definition}
Now we define the set of subharmonic martingale transport plans as follows:
$${\rm SMT}(\mu,\nu)= \Cap_{O \in {\cal R}(\mu,\nu)} {\rm SMT}_O(\mu,\nu).$$
Also, define $\Psi$ to be a space of continuous test functions for subharmonic martingales
$$\Psi  :=  \{ p  \in C(\R^d \times \R^d) \, |\,  p(x,x) = 0 
 \text{ and }\, p(x, \cdot)\in {\cal SH}(\R^d)  
  \text{ for all $x \in \R^d$ 
  }\}.$$
The following theorem tells us that ${\rm SMT}(\mu,\nu)$ is not smaller than ${\rm SMT}_{\R^d}(\mu,\nu)$, and gives a precise connection between the order of $\mu,\nu$, the solvability of SMOT, and that of OSEP. 
\begin{theorem}\label{thm:main2} 
Let $\mu,\nu \in \P_c(\R^d)$. Then
\begin{align}\label{eqn:mainequality}
 {\rm SMT}(\mu,\nu)= {\rm SMT}_{\R^d}(\mu,\nu)=\Big\{\pi \in \Pi(\mu,\nu) \,\Big|\, \int p(x, y)  d\pi (x, y) \ge 0  \quad \forall p \in \Psi \Big\}
   \end{align}
and the following are equivalent:
\begin{enumerate}[1)]
\item $\mu \prec_s \nu$,
\item $\mu \prec_{s(O)} \nu$ for every $O \in {\cal R}(\mu,\nu)$,
\item  ${\rm SMT}(\mu,\nu) \neq \emptyset$,
\item ${\cal T}_O(\mu, \nu): = {\cal T}(\mu, \nu) \cap \{ \tau \ | \ \tau = \tau \wedge \tau_O\} \neq \emptyset$ for every $O \in {\cal R}(\mu,\nu)$,
\end{enumerate}
where $\tau_O$ is the first exit time of a $d$-dimensional Brownian motion $(B_t)_t$ from $O$.

Moreover, for each $\pi \in {\rm SMT}(\mu, \nu)$ and $O \in {\cal R}(\mu,\nu)$, there exists a stopping time $\tau$ such that $B_0 \sim \mu$, $B_\tau \sim \nu$, $\tau = \tau \wedge \tau_O$, and $\pi$ is the joint distribution of $(B_0, B_\tau)$.
\end{theorem}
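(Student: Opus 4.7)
The strategy is to treat the three-way equality together with the embedding statement at the end of the theorem, from which the equivalence of (1)--(4) then follows. First, note that $\R^d$ itself lies in ${\cal R}(\mu,\nu)$ (take $K$ to be any closed ball containing $\supp(\mu+\nu)$; its complement is connected since $d\ge 2$), so ${\rm SMT}(\mu,\nu)\subset {\rm SMT}_{\R^d}(\mu,\nu)$ is immediate from the definition. For the characterization ${\rm SMT}_{\R^d}(\mu,\nu) = \{\pi\in\Pi(\mu,\nu): \int p\,d\pi\ge 0\ \forall p\in\Psi\}$, I disintegrate $\pi(dx,dy) = \pi_x(dy)\mu(dx)$. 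The forward inclusion follows from
\[
\int p\,d\pi = \int \Bigl(\int p(x,y)\,d\pi_x(y)\Bigr)\,d\mu(x) \ge \int p(x,x)\,d\mu(x) = 0,
\]
using that $p(x,\cdot)\in{\cal SH}(\R^d)$ and $p(x,x)=0$. For the reverse, I apply the hypothesis to the family $p(x,y) = \varphi(x)(f(y)-f(x))$ with $\varphi\ge 0$ continuous and $f\in C(\R^d)\cap{\cal SH}(\R^d)$: varying $\varphi$ yields $\int f\,d\pi_x\ge f(x)$ for $\mu$-a.e.\ $x$, and the passage from continuous to general upper-semicontinuous subharmonic $f$ is obtained by standard mollification on a neighborhood of $\supp(\mu+\nu)$.

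The heart of the proof is the \emph{embedding} claim: given $\pi\in{\rm SMT}_{\R^d}(\mu,\nu)$ and $O\in{\cal R}(\mu,\nu)$, I need to construct a (possibly randomized) stopping time $\tau$ with $\tau=\tau\wedge\tau_O$ and $(B_0,B_\tau)\sim\pi$. After disintegration, $\delta_x\prec_s\pi_x$ holds against ${\cal SH}(\R^d)$ for $\mu$-a.e.\ $x$. The plan is to upgrade this to $\delta_x\prec_{s(O)}\pi_x$ via a Runge--Gauthier-type approximation of functions in ${\cal SH}(O)$ by functions in ${\cal SH}(\R^d)$, uniformly on the compact $K\subset O$ from the definition of regular domain; the connectedness of $\R^d\setminus K$ is precisely what makes this approximation available, as illustrated by the annulus example in the introduction, where ${\cal SH}(U)$ contains the harmonic function $-\log|z|$ that cannot be approximated on $K\subset U$ by subharmonic functions on $\R^d$. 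Once the upgrade is achieved, a classical potential-theoretic construction in the spirit of Rost \cite{Ro71}, Chacon--Walsh \cite{ChWa76}, and Baxter--Chacon \cite{BaCh74} provides, for $\mu$-a.e.\ $x$, a stopping time $\tau^x$ of the Brownian motion from $x$ with $\tau^x\le\tau_O^x$ and $B_{\tau^x}^x\sim\pi_x$, and a measurable-selection step (Jankov--von Neumann) glues the family $(\tau^x)_x$ into a single randomized stopping time $\tau$ of the Brownian motion with initial law $\mu$, realizing $\pi$. I expect the Runge-type approximation and the measurable gluing to be the principal obstacles.

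With the embedding in hand, the remaining inclusion ${\rm SMT}_{\R^d}(\mu,\nu)\subset{\rm SMT}(\mu,\nu)$ is automatic: for $\pi\in{\rm SMT}_{\R^d}(\mu,\nu)$, $O\in{\cal R}(\mu,\nu)$, and $f\in{\cal SH}(O)$, the process $f(B_{t\wedge\tau})$ is a submartingale (since $B_{t\wedge\tau}\in O$), so $\int f\,d\pi_x\ge f(x)$ $\mu$-a.e., giving $\pi\in{\rm SMT}_O(\mu,\nu)$; this for every regular $O$ closes the three-way equality. For the equivalences: (2)$\Rightarrow$(1) is trivial since $\R^d\in{\cal R}(\mu,\nu)$; (1)$\Rightarrow$(2) is again the Runge-type approximation; (3)$\Rightarrow$(2) is integration of the defining inequality in $\mu$; (3)$\Rightarrow$(4) is the embedding itself; (4)$\Rightarrow$(3) comes from picking any regular $O_0$ and any $\tau\in{\cal T}_{O_0}(\mu,\nu)$ and noting that the joint law of $(B_0,B_\tau)$ lies in ${\rm SMT}_{O_0}(\mu,\nu)\subset {\rm SMT}_{\R^d}(\mu,\nu)={\rm SMT}(\mu,\nu)$ by the first part of the theorem; and (1)$\Rightarrow$(3) is obtained either by a direct Chacon--Walsh construction producing some $\tau\in{\cal T}(\mu,\nu)$ whose joint law lies in ${\rm SMT}(\mu,\nu)$, or via a Hahn--Banach/Kantorovich separation: were ${\rm SMT}(\mu,\nu)$ empty, the separation would yield some $f\in{\cal SH}(\R^d)$ with $\int f\,d\mu>\int f\,d\nu$, contradicting (1).
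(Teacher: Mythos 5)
Your overall architecture (characterize the set by testing against $\Psi$, prove the embedding, then close the cycle of implications) matches the paper's, but the two load-bearing steps are named rather than proved, and they are exactly where the content of the theorem lives. First, the passage from ${\rm SMT}_{\R^d}(\mu,\nu)$ to ${\rm SMT}_O(\mu,\nu)$ for a regular domain $O$ (equivalently $(1)\Rightarrow(2)$): you appeal to a ``Runge--Gauthier-type approximation of ${\cal SH}(O)$ by ${\cal SH}(\R^d)$ uniformly on $K$,'' but such a statement is not an off-the-shelf theorem in the form you need --- subharmonic functions are merely upper-semicontinuous and may equal $-\infty$, so uniform approximation does not literally make sense, and you give no mechanism. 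The paper's actual argument is: (i) test $\pi$ against $p(x,y)=\mathbf{1}_{B_{z,r}}(x)(\phi(y)-\phi(x))$ for harmonic $\phi$ to get the \emph{equality} $\int\phi\,d\pi_z=\phi(z)$ $\mu$-a.e., using a separability lemma for ${\cal H}(O)$ to fix a single null set; (ii) test against the truncated Newtonian kernels $\psi_{a,c}=\max(\psi(\cdot-a),c)\in{\cal SH}(\R^d)$ and let $c\searrow-\infty$ to get $\int\psi_a\,d\pi_z\ge\psi_a(z)$; (iii) for $f\in{\cal SH}(O)$, use the Riesz decomposition $f=h+\int\psi_a\,d\kappa(a)$ on a precompact $O'\supset K$ and approximate only the harmonic part $h$ by a harmonic polynomial via Walsh's theorem --- this is precisely where the connectedness of $\R^d\setminus K$ enters. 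Your sketch contains neither the Riesz decomposition nor the kernel inequalities, and your a.e.\ statement $\int f\,d\pi_x\ge f(x)$ is derived for each fixed $f$ separately without addressing that the exceptional null set must be chosen uniformly over $f$.

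Second, the embedding below $\tau_O$. You outsource it to ``a classical potential-theoretic construction in the spirit of Rost, Chacon--Walsh, Baxter--Chacon'' plus measurable selection, without verifying that those results deliver a stopping time with $\tau=\tau\wedge\tau_O$ for the domain-restricted order $\prec_{s(O)}$ (the annulus example in the introduction shows this constraint is genuinely delicate). The paper proves this from scratch (Proposition~\ref{prop1}): it builds \emph{spherical martingales} valued in $O$, shows via the subharmonic-envelope construction (Lemma~\ref{shenvelope}, an analogue of Bu--Schachermayer) and a Hahn--Banach denseness argument (Lemma~\ref{denseness}) that $\nu$ is a weak$^*$ limit of laws of such martingales started at $\mu$, and then passes to the limit of the associated hitting times by Baxter--Chacon compactness, with the second-moment bound $\E\tau_n=\E|B_{\tau_n}|^2\le V$ giving tightness and $\E\tau<\infty$. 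Your implication $(1)\Rightarrow(3)$ via Hahn--Banach separation also needs more: identifying the separating functional with a single $f\in{\cal SH}(\R^d)$ from the cone generated by $\Psi$ is a Strassen-type duality that requires a closure argument you do not supply, whereas the paper avoids it entirely by routing $(1)\Leftrightarrow(2)\Rightarrow(4)\Rightarrow(3)$. As written, the proposal is a correct roadmap but not a proof.
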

We then explore a dual problem by considering  the class of functions
\begin{align*}
{\cal K}_c=  \{ (\alpha,\beta, p) \in C_b \times C_b \times \Psi \,|\, \beta(y) -  \alpha(x) +  p(x, y)  \le c( x, y) \},
\end{align*}
and prove the following duality result.

\begin{theorem}%[No duality gap]
\label{prop: no gap SMT}
Let $\mu,\nu \in \P_c(\R^d)$, and let $c$ be a lower-semicontinuous cost. Then the following duality holds:
\begin{align*}%\label{opt}
\sup\left\{  \int \beta \, d\nu - \int \alpha \, d\mu \ \Big| \ (\alpha,\beta, p) \in {\cal K}_c   \right\} &=\inf \left\{ \int c(x, y) \,d\pi \ \Big| \ \pi \in {\rm SMT}(\mu, \nu) \right\} 
 \\
&= \inf\left\{ \E\, [c(B_0, B_\tau)]\, | \, \tau \in {\cal T}(\mu, \nu)\right\}.
\end{align*}  
 \end{theorem}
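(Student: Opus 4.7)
The plan is to establish the two equalities separately. The chain is: $\inf_\tau \E[c(B_0, B_\tau)] = \inf_\pi \int c\, d\pi$ over $\mathrm{SMT}(\mu,\nu)$ (the SMT/OSEP equivalence), followed by $\inf_\pi \int c\, d\pi = \sup_{(\alpha,\beta,p) \in \mathcal{K}_c}(\int \beta \, d\nu - \int \alpha \, d\mu)$ (a Kantorovich-type duality for subharmonic martingale transport).

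\textbf{The SMT/OSEP equality.}  One direction is immediate from Theorem~\ref{thm:main2}: each $\pi \in \mathrm{SMT}(\mu,\nu)$ is realized as the joint law of $(B_0, B_\tau)$ for some $\tau \in \mathcal{T}(\mu,\nu)$ (pick any $O \in \mathcal{R}(\mu,\nu)$), giving $\int c\, d\pi = \E[c(B_0, B_\tau)]$. Conversely, for any $\tau \in \mathcal{T}(\mu,\nu)$, I verify that $\pi_\tau := \mathrm{Law}(B_0, B_\tau)$ lies in $\mathrm{SMT}(\mu,\nu)$ via the characterization \eqref{eqn:mainequality}: for $p \in \Psi$, the process $p(B_0, B_{t \wedge \tau})$ is a local submartingale starting at $p(B_0, B_0) = 0$; the compactness of $\supp\nu$ together with $\E[\tau]<\infty$ allows an application of optional stopping (after truncating by exit times from large balls and using Fatou on the negative part) to yield $\E[p(B_0, B_\tau)] \ge 0$, i.e.\ $\int p \, d\pi_\tau \ge 0$.

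\textbf{Weak duality.}  For any $(\alpha,\beta,p) \in \mathcal{K}_c$ and $\pi \in \mathrm{SMT}(\mu,\nu)$, integrating the pointwise inequality $\beta(y) - \alpha(x) + p(x,y) \le c(x,y)$ against $\pi$ gives
\begin{equation*}
\int \beta \, d\nu - \int \alpha \, d\mu + \int p \, d\pi \le \int c \, d\pi,
\end{equation*}
and $\int p \, d\pi \ge 0$ by the characterization \eqref{eqn:mainequality}, so the dual supremum is dominated by the primal infimum.

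\textbf{Strong duality via minimax.}  The space $\Psi$ is a convex cone containing $0$, hence
\begin{equation*}
\inf_{\pi \in \mathrm{SMT}(\mu,\nu)} \int c \, d\pi = \inf_{\pi \in \Pi(\mu,\nu)} \sup_{p \in \Psi} \int (c - p)\, d\pi,
\end{equation*}
since if $\int p \, d\pi < 0$ for some $p \in \Psi$ the cone property forces the inner supremum to $+\infty$, whereas if $\int p \, d\pi \ge 0$ for all $p \in \Psi$ the supremum is attained at $p = 0$ and equals $\int c\, d\pi$. I then invoke Sion's minimax theorem: $\Pi(\mu,\nu)$ is convex and weak-$*$ compact (Prokhorov, using that $\supp\mu,\supp\nu$ are compact); for fixed $p \in \Psi$, $\pi \mapsto \int(c-p)\, d\pi$ is affine and lsc on $\Pi(\mu,\nu)$ since $c-p$ is lsc and bounded below on $\supp\mu \times \supp\nu$; for fixed $\pi$, $p \mapsto \int(c-p)\, d\pi$ is affine and continuous in the topology of uniform convergence on compacts. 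This swaps the inf and sup. For each fixed $p \in \Psi$ the inner problem is a classical Kantorovich transport with lsc cost $c-p$, whose duality gives
\begin{equation*}
\inf_{\pi \in \Pi(\mu,\nu)} \int (c-p)\, d\pi = \sup \Bigl\{\int \beta \, d\nu - \int \alpha \, d\mu : (\alpha,\beta) \in C_b \times C_b,\ \beta(y) - \alpha(x) \le c(x,y) - p(x,y)\Bigr\};
\end{equation*}
taking the supremum over $p \in \Psi$ on both sides recovers precisely the dual value over $\mathcal{K}_c$.

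\textbf{Main obstacle.}  The delicate point is verifying the hypotheses of Sion's theorem when $c$ is only lower semicontinuous and possibly unbounded. My remedy is to approximate $c$ from below by an increasing sequence of bounded continuous $c_n \uparrow c$, prove the duality for each $c_n$ (where boundedness makes all pairings finite and classical Kantorovich duality applies directly), and pass to the limit via monotone convergence on the primal together with a standard truncation argument on the dual functions. A secondary concern is the optional-stopping step in the SMT/OSEP equality, handled as indicated above by exploiting that $B_\tau$ concentrates on the compact set $\supp\nu$.
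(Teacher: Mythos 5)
Your argument is correct and follows essentially the same route as the paper: weak duality from $\int p\,d\pi \ge 0$, the minimax interchange of $\sup_{p\in\Psi}\inf_{\pi\in\Pi(\mu,\nu)}$ with $\inf_{\pi}\sup_{p}$ combined with classical Kantorovich duality for the cost $c-p$ at fixed $p$, the cone property of $\Psi$ to restrict the infimum to ${\rm SMT}(\mu,\nu)$ via \eqref{eqn:mainequality}, and the identification of the SMT and Skorokhod values through Theorem~\ref{thm:main2} (the paper's Corollary~\ref{same}). Your treatment is if anything slightly more explicit than the paper's (Sion's theorem with verified hypotheses instead of a citation, monotone approximation for lsc costs, and an optional-stopping argument for the inclusion ${\rm Law}(B_0,B_\tau)\in{\rm SMT}(\mu,\nu)$, which the paper declares immediate).
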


We remark that  a dual attainment result has been achieved recently in \cite{GKP-Monge} for a certain class of cost functions, including the power costs $c(x, y)= \pm |x-y|^\alpha$, $\alpha >0$, under suitable assumptions as considered in this paper, but without  assuming radial symmetry on $\mu$ and $\nu$. We in fact use this latter result in the appendix to prove the radial monotonicity principle in Proposition~\ref{rmonoprin}, which is crucial for this paper. 

In section \ref{OSEP} we prove Theorem \ref{thm:main} by using 
that {\em radial monotonicity principle}. In section \ref{SMOT} we prove Theorems \ref{thm:main2}, and \ref{prop: no gap SMT}. 
Sections \ref{OSEP} and \ref{SMOT} can be read independently.

\section{Structure of optimal stopping times in symmetric Skorohod Embeddings}\label{OSEP}

 In this section we focus on  
  Theorem~\ref{thm:main}, and for that we start by  
  introducing a symmetric version of the monotonicity principle of Beiglb{\"o}ck-Cox-Huesmann \cite{bch}, adapted for the radially symmetric case in multi-dimensions.

We first make more precise the filtered Brownian probability space on which we operate.  We consider 
%\begin{definition} 
 $C(\R_+) = \{ \omega : \R_+ \to \R^d \,\,|\,\, \omega(0)=0, \,\omega \text{ is continuous}\}$ to be the  path space starting at $0$. The probability space will be $\Omega := C(\R_+) \times \R^d$ equipped with the probability measure $\mathbb P := \mathbb W \otimes \mu$, where $\mathbb W$ is the Wiener measure and $\mu$ is a given (initial) probability measure on $\R^d$. Stopping times and randomized stopping times will be with respect to this obviously filtered probability space.

Following 
Beiglb{\"o}ck-Cox-Huesmann \cite{bch},   let $S$ be the set of all {\em stopped paths}
\begin{align}\label{StoppedPaths}
S =\{(f^x, s) \,\,|\,\, f^x:[0,s] \to \R^d \mbox{ is continuous and $f^x(0)=x$}\},
\end{align} 
which, by letting $f^x (t ) = f^x (s)$ for $t\ge s$, can be viewed as a subset of $\Omega$.

Notice  that a (randomized) stopping time $\xi$ can be considered as a probability measure on $\Omega \times [0, +\infty]$. In particular, if the stopping time is finite almost surely, then $\xi$ is concentrated on  the set $S$ of stopped paths. In addition, $\xi$ can be disintegrated along the paths in $\Omega$ and give a family of probability measures  $\{\xi_{\omega^x}\}$ on $[0, +\infty]$, where $\omega^x (t) := \omega(t) + x$.

Now we can also interpret $\xi$ as a transport plan in the following way: for $(f^x,s) \in S$, $\xi$ transports the infinitesimal mass $d\xi \big{(}(f^x,s)\big{)}$ from $x \in \R^d$ to $f^x(s) \in \R^d$  along the path $(f^x, s)$. Now define a map $T : S \to \R^d \times \R^d$ by 
\begin{align}\label{T}
 T((f^x,s)) = (x, f^x(s))
\end{align}
and let $T\xi$ be the push-forward of the measure $\xi$ by the map $T$, thus $T\xi$ is a probability measure on $\R^d \times \R^d$, which is a transport plan. 

 Next, we introduce the \emph{conditional randomized stopping time given $(f^x,s) \in S$}, that is, the normalized stopping measure given that we followed the path $f^x$ up to time $s$. 
For $(f^x, s) \in S$ and $\omega \in C(\R_+)$, define the {\em concatenated path} $f^x\oplus\omega$ by 
\begin{align*}
 (f^x\oplus\omega)(t)  
=
 \begin{cases}
    f^x(t) \   & \text{if $t\leq s$}, \\
     f^x(s) + \omega (t-s)  & \text{if $ t>s$.}
\end{cases}
\end{align*}

\begin{definition}[Conditional stopping time \cite{bch}]\label{def:CRST}%\end{document}
 Let $\xi$ be a randomized stopping time, defined on $\Omega$. 
 The conditional randomized stopping  time of $\xi$ given $(f^x,s)\in S$, denoted by $\xi^{(f^x, s)}$,  gives a probability measure on each $\mathbb W$-a.e. $\omega \in C(\R_+)$ as follows:
\begin{align*}\label{CondStop} &\xi^{(f^x,s)}_\omega([0,t])= \frac{1}{1 - \xi_{f^x\oplus\omega}([0,s])}\left(\xi_{f^x\oplus\omega}([0,t+s])-\xi_{f^x\oplus\omega}([0,s])\right) \\ &\mbox{\,\,\,\quad\quad\quad\quad\quad \quad\quad if \quad$\xi_{f^x\oplus\omega}([0,s]) < 1$,}\\
&\xi^{(f^x,s)}_\omega(\{0\}) = 1\quad\mbox{if \quad $\xi_{f^x\oplus\omega}([0,s]) =1$.}
\end{align*}
\end{definition}  
\noindent According to \cite{bch}, this is the normalized stopping measure of the ``bush'' which follows the ``stub'' $(f^x,s)$. Note that $\xi_{f^x\oplus\omega}([0,s])$ does not depend on the bush $\omega$. 
\subsection{The Beiglb{\"o}ck-Cox-Huesmann monotonicity principle on stopped paths} 
Consider the set of all stopped paths $S$, and let  $\Gamma \subset S$ be a concentration set of a given stopping time $\tau$, i.e. $\tau(\Gamma) =1$. Given the ``stopped paths" $\Gamma$, consider the ``going paths" $\Gamma^< := \{(f, t) \,|\, \exists (g,t') \in \Gamma, t < t', \text{ and } f \equiv g \text{ on } [0,t].\}$ We shall write
\begin{align}
(x \to \psi_y : x' \to y') \in (\tau, \Gamma),  
\end{align}
if there exist $(f^{x}, t) \in \Gamma^<$, $(g^{x'}, t') \in \Gamma$  %and  {\red $t < \tau$}, 
such that  $y = f^{x}({t})$, $y' = g^{x'}({t'})$, and $\psi_y =  {\rm Law}(B^y({\tau^{(f^{x},t)}}))$.  Notice that the measure $\psi_y$ is the conditional probability measure generated by the strong Markov property of the Brownian motion that once stopped at $y$ at time $t$ then continued following the stopping time rule $\tau$. Also, the fact $(g^{x'}, t') \in \Gamma$ means intuitively that the path $g^{x'}$ drops a mass at the point $y'$ under the stopping rule $\tau$.
We shall %then 
consider the following cost for such a pair of transport
\begin{align}
C (x \to \psi_y : x' \to y') = \int c(x, z) \,d\psi_y (z) + c(x', y').
\end{align}

In the sequel, we shall denote by ${\rm SH}(x)$ the set of   %any 
probability measures $\psi$ whose barycenter $x$ satisfies $\delta_x \prec_{s} \psi$. As seen in Theorem \ref{thm:main2}, these are the probabilities that can be obtained by stopped Brownian motions starting at $x$. The following principle will be crucial for the proof ofTheorem \ref{thm:main}. It was proved  by Beiglb{\"o}ck-Cox-Huesmann \cite{bch} for more general path-dependent costs. 

\begin{theorem}[Monotonicity principle \cite{bch}]\label{monoprin} Suppose $c$ is a cost function and that $\tau$ is an optimal stopping time for the minimization problem \eqref{opt}. Then, there exists a  Borel set $\Gamma\subseteq S$ such that $\tau(\Gamma)=1$, and $\Gamma$ is {\em $c$-monotone} in the following sense: If $\psi_y \in {\rm SH}(y)$ and $(x \to \psi_y : x' \to y) \in (\tau,\Gamma)$, then
\begin{align}\label{sh5}
C(x \to \psi_y : x' \to y) \leq C(x \to y : x' \to \psi_{y}).
\end{align}
\end{theorem}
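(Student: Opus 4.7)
The plan is to argue by contradiction, following the standard variational strategy behind monotonicity principles in optimal transport and its martingale variants. Suppose no Borel set $\Gamma \subseteq S$ with $\tau(\Gamma) = 1$ enjoys the $c$-monotonicity property \eqref{sh5}. Then I would first extract a measurable ``bad set'' $A$ of pairs $((f^x, t), (g^{x'}, t')) \in \Gamma^< \times \Gamma$ such that $f^x(t) = g^{x'}(t') =: y$ and on $A$ the reverse strict inequality
\[
C(x \to y : x' \to \psi_y) < C(x \to \psi_y : x' \to y)
\]
holds, where $\psi_y = {\rm Law}(B^y(\tau^{(f^x,t)}))$. Producing $A$ with positive mass under a suitable product of disintegrations of $\tau$ is the first serious step; I would do this by a Jankov--von Neumann style measurable selection applied to the multifunction sending a violating triple $(x, y, x')$ to a matching pair of stopped-path representatives ending at the common position $y$.

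The core step is then a swap of continuations along $A$. On a subset of mass $\varepsilon > 0$ of $A$, I would modify $\tau$ into a new (randomized) stopping time $\tilde{\tau}$ as follows. Paths whose history matches a stub $(f^x, t)$ from the first coordinate of a pair in $A$ are prescribed to stop immediately at $y = f^x(t)$, whereas paths whose history matches the second stub $(g^{x'}, t')$, which under $\tau$ had been stopped at $y$, are now prescribed to continue as Brownian motions governed by the conditional rule $\tau^{(f^x,t)}$. The crucial points that make $\tilde{\tau}$ admissible are (i) the coincidence $f^x(t) = g^{x'}(t') = y$, which allows the two bushes to be interchanged without violating continuity of paths, and (ii) the strong Markov property, which turns $\tau^{(f^x,t)}$ into a bona fide continuation rule from $y$ regardless of the history that led there. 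Assembling these pieces using the disintegration formalism of Definition~\ref{def:CRST} produces an honest randomized stopping time.

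The verification then splits in two. First, $\tilde{\tau} \in {\cal T}(\mu, \nu)$: the initial law is preserved because starting positions are untouched, and the terminal law is preserved because the swap merely exchanges, between two families of paths of equal infinitesimal mass sitting at the common intermediate point $y$, a terminal distribution $\psi_y$ against a Dirac mass $\delta_y$. Second, by construction and a Fubini-type integration over $A$, the difference in expected cost between $\tilde{\tau}$ and $\tau$ equals, up to the factor $\varepsilon$, the integral of $C(x \to y : x' \to \psi_y) - C(x \to \psi_y : x' \to y) < 0$, so $\E[c(B_0, B_{\tilde{\tau}})] < \E[c(B_0, B_\tau)]$, contradicting optimality of $\tau$.

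The hard part will not be the heuristic of swapping a bush, but the measure-theoretic rigor: one has to choose $A$ measurable with positive mass, disintegrate $\tau$ to isolate the ``stubs'' and ``bushes'' cleanly, and glue the modified halves together into a randomized stopping time adapted to the Brownian filtration. Measurable selection for the multifunctions involved and the tower property for the Brownian filtration are the natural tools, and this bookkeeping is exactly the technical content of the argument of Beiglb\"ock--Cox--Huesmann~\cite{bch} that the present theorem invokes.
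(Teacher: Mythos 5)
The paper does not actually prove this statement: Theorem~\ref{monoprin} is quoted verbatim from Beiglb\"ock--Cox--Huesmann \cite{bch}, so there is no in-paper argument to compare yours against. Judged on its own terms, your outline correctly identifies the exchange heuristic (stop the penetrating path at $y$, let the path that had stopped at $y$ continue according to the conditional rule $\tau^{(f^x,t)}$, use the strong Markov property to see that marginals are preserved and that the cost strictly decreases). That part is sound and is indeed the engine of the BCH proof.

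The genuine gap is in your first ``serious step''. The negation of the conclusion is: \emph{every} Borel $\Gamma$ with $\tau(\Gamma)=1$ admits a stop-go pair in $\Gamma^<\times\Gamma$. This does \emph{not} imply that the set ${\rm SG}$ of bad pairs has positive mass under any product of disintegrations of $\tau$, and no measurable-selection theorem will manufacture such positive mass: exactly as in classical optimal transport, the bad set can meet $\Gamma^<\times\Gamma$ for every full-measure $\Gamma$ while being null for every relevant product measure. Circumventing this is the real content of \cite{bch}. Their route is a dichotomy based on the Kellerer/Strassen-type duality for couplings concentrated on an analytic set: either there exists a nontrivial \emph{joining} (a finite measure on pairs, with first marginal dominated by the ``going'' part of $\tau$ and second marginal dominated by the ``stopping'' part) concentrated on ${\rm SG}$ --- in which case your swap construction applies to that joining and contradicts optimality --- or else ${\rm SG}\subseteq (N_1\times S)\cup(S\times N_2)$ for suitable $\tau$-null sets $N_1,N_2$, in which case one deletes $N_1\cup N_2$ to obtain the desired $\Gamma$. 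Analyticity of ${\rm SG}$ and the Jankov--von Neumann theorem enter in establishing this alternative, not in producing a positive-mass bad set directly. Without this dichotomy your contradiction argument does not get off the ground, so as written the proposal does not constitute a proof but rather a correct description of the second half of one.
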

Here is a first easy but informative application of this principle, but first an important result about the case where the source and target have overlapping mass. {Recall that stopping measures (which are not necessarily probability measures) lie in $C(\R_+ ; \R^d) \times [0,+\infty]$. }

\begin{proposition} \label{stay} Let $\tau_0$ be a zero-stopping measure, i.e. concentrated on\\ $C(\R_+ ; \R^d) \times \{0\}$, and assume that its projection on the second component $\R^d$ is $\mu \wedge \nu$.   
Assume the cost function is of the form $c(x,y) = f(|x-y|)$ and that it satisfies 
\begin{equation}\label{eqn:triangle}\hbox{$c(x,x)=0$ \quad and \quad $c(x, z)\leq c(x, y) + c(y, z)$ for all $x, y, z\in \R^d$,}
\end{equation} 
with equality in the triangle inequality occurring if and only if  $x, y, z$ are along a line. 

Then for any $\tau \in {\cal T}(\mu, \nu)$ that minimizes problem \eqref{opt},  
we must have $\tau_0 \le \tau$, and therefore $\tau^*:=\tau - \tau_0$ solves the minimization problem \eqref{opt} with the same cost  
but for the disjoint source marginal $\bar \mu := \mu - \mu \wedge \nu$, and target marginal $\bar \nu := \nu - \mu \wedge \nu$.
\end{proposition}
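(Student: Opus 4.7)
The plan is to argue by contradiction using the Beiglb{\"o}ck-Cox-Huesmann monotonicity principle (Theorem~\ref{monoprin}). Fix a minimizer $\tau$ of \eqref{opt}, let $\Gamma \subseteq S$ be a $c$-monotone Borel set with $\tau(\Gamma)=1$ provided by Theorem~\ref{monoprin}, and suppose toward contradiction that $\tau_0 \not\le \tau$. Disintegrating $\tau$ along its starting points $\omega(0)\sim \mu$, there must exist a positive-$(\mu\wedge\nu)$-measure set of points $y$ on which $\tau^{(f^y,0)}$ fails to place full mass at $0$, so that the conditional distribution $\psi_y:=\law(B^y(\tau^{(f^y,0)}))$ is not $\delta_y$. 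On the same set, since the target marginal at $y$ must be reproduced while part of the mass originally at $y$ has moved away, the corresponding $\nu$-mass at $y$ must be supplied from elsewhere; thus one can find $(g^{x'},t')\in\Gamma$ with $x'\neq y$ and $g^{x'}(t')=y$. Rigorously extracting such a pair $(y,x')$ from these two positive-measure phenomena---pairing them within the same section of $\Gamma$---will be the most delicate measure-theoretic step, essentially a Fubini-type argument along the joint disintegration of $\tau$ with respect to source and target marginals.

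Granting such a pair, I apply Theorem~\ref{monoprin} with $x=y$, $t=0$, the stub $(f^y,0)$, and the stopped path $(g^{x'},t')$. This yields
\begin{equation*}
\int c(y,z)\,d\psi_y(z) + c(x',y) \;\le\; c(y,y) + \int c(x',z)\,d\psi_y(z).
\end{equation*}
Using $c(y,y)=0$ and integrating the triangle inequality $c(x',z)\le c(x',y)+c(y,z)$ against $\psi_y$ yields the reverse bound, hence equality throughout. Therefore $c(x',z)=c(x',y)+c(y,z)$ for $\psi_y$-a.e.\ $z$, and by the strict equality clause in \eqref{eqn:triangle}, the point $y$ must lie on the segment $[x',z]$ for $\psi_y$-a.e.\ $z$. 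Equivalently, $\psi_y$ is supported on the ray $\{y+s\hat v:s\ge 0\}$ with $\hat v:=(y-x')/|y-x'|$.

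The contradiction then comes from the martingale property. Since $\E[\tau]<\infty$, the conditional stopping times $\tau^{(f^y,0)}$ have finite expectation for $\mu$-a.e.\ $y$, so optional sampling gives $\psi_y\in{\rm SH}(y)$ and in particular $\int z\,d\psi_y(z)=y$. Combined with support on a single ray emanating from $y$, this forces $\int s\,d\psi_y=0$ and therefore $\psi_y=\delta_y$, contradicting the construction above. Hence $\tau_0\le \tau$.

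The second assertion is then immediate. Set $\tau^*:=\tau-\tau_0\ge 0$; its starting and ending marginals are $\bar\mu$ and $\bar\nu$, and since $\tau_0$ is concentrated on $\{t=0\}$ with $c(y,y)=0$, the cost of $\tau^*$ equals that of $\tau$. If some $\tilde\tau^*\in{\cal T}(\bar\mu,\bar\nu)$ had strictly smaller cost, then $\tilde\tau:=\tilde\tau^*+\tau_0\in{\cal T}(\mu,\nu)$ would beat $\tau$, contradicting the optimality of $\tau$.
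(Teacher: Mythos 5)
Your proposal is correct and follows essentially the same route as the paper: both apply the Beiglb\"ock--Cox--Huesmann monotonicity principle to a stub $(f^y,0)$ paired with a stopped path arriving at $y$ from a distinct starting point, and both combine the strict triangle inequality with the barycenter (martingale) property of $\psi_y$ to force $\psi_y=\delta_y$. The only cosmetic difference is that the paper extracts the offending pair by looking at the positive part of $\pi_0-\pi_D$ for the induced transport plan $\pi=T\tau$, whereas you work directly with the disintegration of $\tau$; your version also makes explicit the ray-support/barycenter step that the paper leaves implicit when it asserts the strict integrated inequality.
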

\begin{proof} Note that having  $\tau_0 \le \tau$ means that under $\tau$, the common mass $\mu \wedge \nu$ stays put.
Let $\pi:=T\tau$ (resp., $\pi_0:=T\tau_0$) be a coupling of the pair $(\mu,\nu)$ (resp., $(\mu\wedge\nu, \mu\wedge\nu)$ 
and observe that 
\begin{align*}
\tau_0 \le \tau \iff \pi_0 \le \pi.
\end{align*}
We will prove the latter, and for this we largely follow the argument of \cite{Lim}. Let $\Gamma \subset S$ be a $c$-monotone set for $\tau$ in Theorem \ref{monoprin}. Let $T_\Gamma:=T(\Gamma)$ so that $\pi(T_\Gamma)=1$. Let $\pi_D$ be the restriction of $\pi$ on the diagonal $\Delta=\{(x,x) : x \in \R^d\}$. Note that $\supp \pi_0 \subset \Delta$. Now if $\pi_0 \nleq \pi$, the measure $\pi_0 - \pi_D$ has a non-zero positive part. Let $\eta$ be the push-forward measure of this positive part by the projection $(x,x) \mapsto x$. Denote $d \pi(x,y) = d \pi_x(y) d\mu(x)$ and $d \pi(x,y) = d \pi^y(x) d\nu(y)$ to be disintegrations of $\pi$ w.r.t. $\mu$ and $\nu$ respectively. Then by definition of $\eta$, we have
 \begin{align*}
\pi_x \neq \delta_x\, \text{ and } \, \pi^x \neq \delta_x, \quad \eta-a.e. \, x.
\end{align*}
This implies that there exist $(f^{x}, 0) \in \Gamma^<$, $(g^{z}, t) \in \Gamma$  such that  $z \neq x$, $x = g^{z}({t})$ and $\pi_x =  {\rm Law}(B^x_\tau)$. Now recall the assumption on $c$ that $c(y, x) +c(x, z) \geq c(y,z)$, and the inequality is strict unless $z,x,y$  lie on a line in this order. Hence
\begin{align*}
\int c(y, x)\, d\pi_x (y) + c(x,z) > \int c(y, z)\, d\pi_x (y).
\end{align*}
As $c(x,x)=0$, this means
\begin{align*}
C(x \to \pi_x : z \to {x}) > C(x \to x : z \to \pi_{x}),
\end{align*}
a contradiction to Theorem \ref{monoprin}.
\end{proof}
\begin{remark} 
Note that the cost $c(x, y)=|x-y|^\alpha$, $0< \alpha \le 1$ satisfies the triangle inequality \eqref{eqn:triangle}, and the above proposition implies that whenever we are studying the minimization problem with such a cost, we can assume that $\mu \wedge \nu=0$ without loss of generality.
Indeed,  the proposition says that in this case all the optimal (possibly randomized) stopping times $\tau$ of the minimization problem \eqref{opt}  are uniquely decomposed into two stopping measures as $\tau= \tau_0 + \tau^*$, where $\tau_0$ is concentrated at the time $T=0$ while $\tau^*$ is the stopping measure at positive times. Furthermore, given marginals $\mu$ and $\nu$,  $\tau_0$ is the largest possible stopping measure in such a way that $\tau_0$ acts as an identity transport from $\mu \wedge \nu$ to itself.  In  Section~\ref{sec:hitting} we will prove that  the measure $\tau^*$, once disintegrated along each path, is supported at a single time, i.e. it is non-randomized. Thus, the optimal stopping times $\tau = \tau_0+\tau^*$ can be randomized only at time $0$. 
\end{remark}

\subsection{The radially symmetric monotonicity principle}
We now give a variant of Theorem \ref{monoprin}, which exploits the radial symmetry of the marginals $\mu$ and $ \nu$. For this, we will introduce several notions related to radial symmetry. 
 First,  we give the definition of $R$-equivalence.
Recall the definition of the transport plan $T\xi$ induced by the stopping time $\xi$ (see \eqref{T}).

\begin{definition} Let $\lambda(x)=|x|$ be the modulus map. 
\begin{enumerate}
\item Two probability measures $\phi$ and $\psi$ on $\R^d$ are said to be $R$-equivalent if their push-forward measures by $\lambda$ coincide, i.e. $\lambda_\# \phi = \lambda_\# \psi$.  We then write $\phi \cong_R \psi$.
\item Two stopping times $\xi$ and $\zeta$ are said to be $R$-equivalent if the first marginals and second marginals of $T\xi$ and $T\zeta$ are $R$-equivalent, respectively. We then write $\xi \cong_R \zeta$.
\end{enumerate}
\end{definition}
\noindent The following symmetrization was introduced in \cite{Lim} for the {\em Martingale Optimal Transport Problem} with radially symmetric marginals. It will also be useful in this paper. 
Let $\mathfrak{M}$ be the group of all $d \times d$ real orthogonal matrices, and let $\mathcal{H}$ be the Haar measure\footnote{The Haar measure $\mathcal{H}$ associated to a compact topological group $G$ is the unique probability measure  which is left- and right-invariant: $H(gA) = H(A)$ and $H(Ag) = H(A)$ for every $g \in G$ and $A \in \mathcal{B}(G)$. See \cite[Chapter 11]{Fo13} for more details.} on $\mathfrak{M}$. For a given $M \in \mathfrak{M}$ and a stopping time $\xi$, we define $M\xi$ as follows: for each $A \subset S$, set
$$(M\xi)(A) = \xi(M^{-1}(A)).$$
Clearly, $M\xi$ is also a stopping time. Now we introduce the symmetrization operator which acts on both the probability measures on $\R^d$ and on the stopping times.
\begin{definition}\label{symmetrization} The symmetrization operator $\Theta$ acts on  the set of probability measures on $\R^d$, and on the set of stopping times as follows:
  \begin{enumerate}
\item  
For each probability measure $\mu$ on $\R^d$ and $B \subset \R^d$,
$$(\Theta\mu)(B) =\int_{M \in \mathfrak{M}}  (M\mu)(B) \,d\mathcal{H}(M).$$
\item  
For each stopping time $\xi$ and $A \subset S$,
$$(\Theta\xi)(A) =\int_{M \in \mathfrak{M}}  (M\xi)(A) \,d\mathcal{H}(M).$$
\end{enumerate}
\end{definition}
Observe that $\Theta\mu$ is the unique radially symmetric probability measure which is $R$-equivalent to $\mu$. Moreover, for any stopping time $\xi$, notice that (see \cite{Lim} for a proof)
\begin{align}
\text{if $T\xi$ has marginals $\mu$ and $ \nu$, then $T(\Theta\xi)$ has marginals $\Theta\mu$ and $ \Theta\nu$.}
\end{align}
This leads to the following important observation: Assume $c(x,y)$ is a rotation invariant cost function, i.e., $c(Mx, My) = c(x,y)$ for any $M \in \mathfrak{M}$,  and define the cost of a stopping time $\xi$ to be:
$$C(\xi) = \int_{\R^d \times \R^d} c(x,y)\,T\xi(dx,dy).$$
If $\xi$ solves the minimization problem \eqref{opt} where $T\xi$ has radially symmetric marginals $\mu$ and $\nu$, then for any stopping time $\zeta$, we must have
\begin{align}\label{compare}
C(\zeta) \ge C(\xi) \quad \text{whenever} \quad \zeta \cong_R \xi.
\end{align}
Indeed, if $C(\zeta) < C(\xi)$, then  since $C(\zeta)=C(\Theta\zeta)$, $\Theta\zeta$ will solve the minimization problem \eqref{opt} with the same marginals $\mu, \nu$ and less cost, a contradiction. Of course, if $\xi$ solves the maximization problem then the opposite inequality in \eqref{compare} must hold.\\

We are now ready to introduce  the radial monotonicity principle. A proof will be provided in  the appendix. 
\begin{proposition}[Radial monotonicity principle]\label{rmonoprin}  Let $c(x, y) =\pm |x-y|^\alpha$, $\alpha >0$, and $\mu, \nu \in \P_{c}(\R^d)$ be radially symmetric. 
 Assume that  $\mu \in H^{-1}(\R^d)$, $\mu\prec_s \nu$,  $\mu \wedge  \nu =0$, and $\mu (\partial \supp \mu)=0$. 
Suppose $\tau$ is an optimal stopping time for the corresponding minimization problem \eqref{opt}.  Then, there exists a  Borel set $\Gamma\subseteq S$ such that $\tau(\Gamma)=1$, and $\Gamma$ is radially $c$-monotone in the following sense: 

If $\psi_y \in {\rm SH}(y)$, $(x \to \psi_y : x' \to {y'}) \in (\tau, \Gamma)$ and $|y| = |y'|$, then
\begin{align}\label{sh6}
C(x \to \psi_y : x' \to {y'}) \leq C(x \to y : x' \to \phi_{y'}) 
\end{align}
for any $\phi_{y'} \in {\rm SH}(y')$ that is R-equivalent to $\psi_y$.

\end{proposition}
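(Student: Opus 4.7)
The plan is to deduce the radial monotonicity principle from the dual attainment result of \cite{GKP-Monge}, combined with rotational symmetrization afforded by the radial structure of $\mu$ and $\nu$. The overall strategy is to extract a rotation-invariant dual optimizer, convert dual attainment into pointwise equalities along stopped paths, and on the resulting Borel set reduce the inequality \eqref{sh6} to a non-negative remainder.

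First, I would invoke the dual attainment theorem of \cite{GKP-Monge}, which in our setting produces a triple $(\alpha^*, \beta^*, p^*) \in {\cal K}_c$ realizing the dual value in Theorem~\ref{prop: no gap SMT}, with $\pi := T\tau$ the optimal transport plan. Primal--dual equality combined with the dual constraint and the subharmonic inequality $\int p^*(x, \cdot)\, d\pi_x \ge p^*(x,x) = 0$ forces two pointwise identities: (i) $c(x,y) = \beta^*(y) - \alpha^*(x) + p^*(x,y)$ for $\pi$-a.e. $(x,y)$, and (ii) $\int p^*(x, z)\, d\pi_x(z) = 0$ for $\mu$-a.e. $x$. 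I would then symmetrize: averaging $(\alpha^*, \beta^*, p^*)$ over the Haar measure on the orthogonal group $\mathfrak{M}$ produces a rotation-invariant triple $(\tilde\alpha, \tilde\beta, \tilde p)$ that still attains the dual, because $\mu, \nu$ are radial and $c$ is rotation invariant. Averaging preserves the defining constraints of ${\cal K}_c$: $\tilde p(x,x) = 0$ survives, and $\tilde p(x, \cdot)$ remains subharmonic since rotations preserve subharmonicity. The rotation invariance yields $\tilde\alpha(x) = A(|x|)$, $\tilde\beta(y) = B(|y|)$, and $\tilde p(Mx, My) = \tilde p(x,y)$ for every $M \in \mathfrak{M}$.

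Next I would derive the crucial ``bush equality'' from (ii). For $\mu$-a.e.\ $x$, the process $\tilde p(x, B_{\tau \wedge t})$ is a subharmonic submartingale starting at $\tilde p(x, x) = 0$ with terminal expectation $\E[\tilde p(x, B_\tau)] = \int \tilde p(x, z)\, d\pi_x(z) = 0$; monotonicity in $t$ forces $\E[\tilde p(x, B_{\tau \wedge t})] = 0$ for all $t$, so the process is in fact a martingale up to $\tau$. The optional stopping / tower property applied at an intermediate stopping time along a stub $(f^x, t) \in \Gamma^<$ with $y = f^x(t)$ then yields
\begin{align*}
\int \tilde p(x, z)\, d\psi_y(z) \;=\; \tilde p(x, y)
\end{align*}
for $\tau$-a.e.\ stopped path, where $\psi_y$ is the bush distribution starting at $y$ under $\tau$. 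I would take $\Gamma \subset S$ to be the Borel set on which both (i) and this bush equality hold; standard disintegration arguments show $\tau(\Gamma) = 1$.

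Finally, verification of \eqref{sh6} on $\Gamma$ is a direct computation. For $(x \to \psi_y : x' \to y') \in (\tau, \Gamma)$ with $|y| = |y'|$, expansion via (i) and the bush equality gives
\begin{align*}
C(x \to \psi_y : x' \to y') = \int \tilde\beta(z)\, d\psi_y(z) - \tilde\alpha(x) + \tilde p(x, y) + \tilde\beta(y') - \tilde\alpha(x') + \tilde p(x', y'),
\end{align*}
while for any $\phi_{y'} \in {\rm SH}(y')$ that is $R$-equivalent to $\psi_y$, the dual inequality and subharmonicity of $\tilde p(x', \cdot)$ yield
\begin{align*}
C(x \to y : x' \to \phi_{y'}) \;\ge\; \tilde\beta(y) - \tilde\alpha(x) + \tilde p(x, y) + \int \tilde\beta(z)\, d\phi_{y'}(z) - \tilde\alpha(x') + \int \tilde p(x', z)\, d\phi_{y'}(z).
\end{align*}
The difference of RHS and LHS decomposes into three brackets: $\tilde\beta(y) - \tilde\beta(y')$, which vanishes by $|y|=|y'|$ and radiality of $\tilde\beta$; $\int \tilde\beta\, d\phi_{y'} - \int \tilde\beta\, d\psi_y$, which vanishes by $\phi_{y'} \cong_R \psi_y$ and radiality of $\tilde\beta$; and $\int \tilde p(x', z)\, d\phi_{y'}(z) - \tilde p(x', y')$, which is non-negative by $\phi_{y'} \in {\rm SH}(y')$ and subharmonicity of $\tilde p(x', \cdot)$. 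The main obstacle is the bush equality, and this is precisely why the proof needs dual attainment rather than only the Beiglb\"ock--Cox--Huesmann monotonicity principle: without it the $\tilde p$-terms on the two sides of \eqref{sh6} cannot be telescoped into a clean non-negative remainder.
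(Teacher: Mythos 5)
Your proposal is correct and follows essentially the same route as the paper's appendix: dual attainment from \cite{GKP-Monge}, Haar-measure symmetrization over the orthogonal group to obtain a radial dual optimizer, complementary slackness along stopped paths (your ``bush equality'' is exactly the martingale property of the value function invoked in the paper via \cite[Theorem 4.7 (4.4)]{GKP-Monge}), and cancellation of the $\beta$-terms using $|y|=|y'|$ and $R$-equivalence, leaving a non-negative subharmonicity remainder. The only steps you skip are the paper's preliminary reduction to $\supp\mu\cap\supp\nu=\emptyset$ (using $\mu\wedge\nu=0$ and $\mu(\partial\supp\mu)=0$) and the modification of $c$ to a $C^2$ cost subharmonic in $y$, both of which are needed to verify the hypotheses under which the dual attainment theorem of \cite{GKP-Monge} applies.
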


\begin{remark} The result should hold for all radially symmetric measures without the additional assumptions on $\mu$ and $\nu$, i.e.,  $\mu \in H^{-1}(\R^d)$ and  $\mu (\partial \supp \mu)=0$. 

Actually, we believe the principle of radial monotonicity should follow directly from the general monotonicity principle (Theorem~\ref{monoprin})  once applied to the case where the marginals are radially symmetric. An idea for the proof goes as follows: If the optimal stopping time $\tau$ allows a particle starting at $x$ to diffuse  when it reaches $y$ so that  it becomes the probability measure $\psi_y$, but takes another particle at ${x'}$  to stop at $y'$, and if we have the opposite inequality 
\begin{align}\label{sh2}
C(x \to \psi_y : x' \to {y'}) > C(x \to y : x' \to \phi_{y'}) 
\end{align}
for some $\phi_{y'} \in {\rm SH}(y')$, then we can  modify the stopping time $\tau$ to $\tau'$ in such a way that  the particle at $x$ now stops at $y$ by $\tau'$, but instead the particle at $x'$ starts diffusing at $y'$ until it becomes  $\phi_{y'}$. Then \eqref{sh2} means that the cost for $\tau'$ is smaller than that of $\tau$, but note that  the modified stopping time $\tau'$ may not satisfy the terminal marginal condition $\nu$. However, as $\psi_y \cong_R \phi_{y'}$ and $|y| = |y'|$, $\tau'$ is also $R$-equivalent to $\tau$, a contradiction by \eqref{compare}. The radial monotonicity principle asserts the existence of a set of stopped paths $\Gamma$ which supports the optimal stopping time and resists any such modification. %\end{proof}

\end{remark}

\subsection{A variational lemma}

 In this section we prove our key observation, namely, Lemma~\ref{key}, which shows a variational calculus with elements in ${\rm SH}(y)$. This will yield Proposition~\ref{consequence} that provides crucial comparisons that are used in the next section to  prove our main theorem.
 To explain the idea, first let
$S_{y,r}$ be the uniform probability measure on the sphere of center $y$ and radius $r$, arguably the most simple element in ${\rm SH}(y)$. We choose $c(x,y) = |x-y|$ for simplicity and consider the following ``gain" function,
\begin{align*}
G(x) = G(x,y,r) := \int  |x-z| \,dS_{y,r}(z) -  |x-y|,
\end{align*}
and its gradient $\nabla_x G$. Note that $G$ is essentially the increase in cost when the Dirac mass at $y$ diffuses uniformly onto the sphere. It satisfies the following properties:
\begin{enumerate}
\item If $|x-y| < |x'-y'|$ and $r$ is fixed, then $G(x,y,r) > G(x',y',r)$.\\
In other words, for the same diffusion, the increase in cost is greater when the distance $|x-y|$ is small. Hence, for the minimization problem, it is better to stop particles near the source $x$, and let the particles that are far from $x$ to diffuse, as long as the given marginal condition is respected.
\item The vector $\nabla G(x)$ points toward the direction $y-x$, thus the directional derivative $\nabla_u G(x)$ is
\begin{align*}
\nabla_u G(x) < 0 \quad \text{if} \quad \langle u, y-x \rangle < 0.
\end{align*}
Hence the gain function decreases when $x$ moves away from the ``center of diffusion" $y$.  By combining this with the %radial 
monotonicity principle  (Proposition~\eqref{rmonoprin}), one can get crucial information about the optimal stopping time. 
 
\end{enumerate}

From now on,  $c(x,y) = |x-y|^{\alpha}$, $0 < \alpha \neq 2$. We define the {\em gain function}
\begin{align*}
G(x, \psi_y) := \int  |x-z|^\alpha d\psi_y (z) -  |x-y|^\alpha \quad \text{for} \quad \psi_y \in {\rm SH}(y).
\end{align*}
 Note that $G$ is essentially the increase in cost when the Dirac mass at $y$ diffuses to $\psi_y$.
The following variational lemma is crucial for our analysis.
\begin{lemma}\label{key}
Let $x, y$ be nonzero vectors in $\R^d$ and let $r = |x|$. Let $u$ be a unit tangent vector to the sphere $S_r$ at $x$, such that $\langle u, y \rangle <0$. Let $\psi_y \in {\rm SH}(y) \setminus \{ \delta_y\}$. Then, there exists a $\phi_y \in {\rm SH}(y)$ which is R-equivalent to $\psi_y$, such that
\begin{align*}
G(x, \phi_y) &= G(x, \psi_y) \\
\nabla_u G(x, \phi_y) &< 0 \quad \text{if} \quad \,0 < \alpha < 2,\\
\nabla_u G(x, \phi_y) &> 0 \quad \text{if} \quad\quad\quad \alpha > 2,
\end{align*}
where the directional derivative is applied to the $x$ variable.
\end{lemma}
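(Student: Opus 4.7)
The plan is to explicitly construct $\phi_y$ by symmetrizing $\psi_y$ via a well-chosen orthogonal involution, so that R-equivalence, ${\rm SH}(y)$-membership, and the $G$-value are all automatic, leaving only the sign of the directional derivative to verify. Observe first that $x$ and $y$ are linearly independent: if $y = cx$, then $\langle u, y\rangle = c\langle u,x\rangle = 0$, contradicting $\langle u,y\rangle<0$. In dimension $d\ge 3$, the two-plane $V := \mathrm{span}(x,y)$ has a nontrivial orthogonal complement $V^\perp$ of dimension $d-2\ge 1$, so the orthogonal involution $R\in O(d)$ that fixes $V$ pointwise and acts as $-\mathrm{Id}$ on $V^\perp$ is a genuine non-identity isometry with $Rx=x$ and $Ry=y$. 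Consequently $R_\#\psi_y$ lies in ${\rm SH}(y)$ (since $R$ is an isometry fixing the barycenter $y$, and subharmonic order is preserved) and is $R$-equivalent to $\psi_y$ (isometries preserve moduli). Define
\[
\phi_y := \tfrac12\,\psi_y + \tfrac12\,R_\#\psi_y,
\]
which is $R$-invariant, lies in ${\rm SH}(y)$, and is R-equivalent to $\psi_y$.

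Matching the $G$-value is immediate from $Rx = x$: a change of variables gives
\[
\int |x - z|^\alpha\, d R_\#\psi_y(z) = \int |x - Rz|^\alpha\, d\psi_y(z) = \int |Rx - Rz|^\alpha\, d\psi_y(z) = \int |x - z|^\alpha\, d\psi_y(z),
\]
so $G(x, R_\#\psi_y) = G(x, \psi_y)$ and therefore $G(x, \phi_y) = G(x, \psi_y)$. Moreover, the $R$-invariance of $\phi_y$ forces $\nabla F_{\phi_y}(x) \in V$, so $\nabla_u G(x, \phi_y) = \nabla_{u_V} G(x, \phi_y)$ where $u_V$ is the orthogonal projection of $u$ onto $V$. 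This projection still satisfies $\langle u_V, x\rangle = 0$ and $\langle u_V, y\rangle = \langle u, y\rangle < 0$, so the angular data are preserved under the reduction to $V$.

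Finally, to verify the sign, decompose each $z$ in the integrals as $z = z_V + z_\perp$ with $z_V\in V$ and $z_\perp\in V^\perp$, and pair $z$ with $Rz = z_V - z_\perp$. Using $|x - z|^2 = |x - z_V|^2 + |z_\perp|^2$ and the $R$-invariance of $\phi_y$, the derivative simplifies to
\[
\nabla_u G(x, \phi_y) \;=\; \alpha\, u_V \cdot \left[\int \bigl(|x - z_V|^2 + |z_\perp|^2\bigr)^{(\alpha-2)/2}(x - z_V)\, d\phi_y(z)\;-\;|x-y|^{\alpha-2}(x - y)\right].
\]
Since $\int z_\perp\, d\phi_y = 0$ by $R$-invariance, the barycenter constraint reduces to $\int z_V\, d\phi_y = y$, and the required strict sign should follow from Jensen's inequality applied through the strict concavity (for $0 < \alpha < 2$) or strict convexity (for $\alpha > 2$) of $r \mapsto r^{\alpha/2}$, combined with the non-triviality hypothesis $\psi_y \neq \delta_y$ (which guarantees a positive-measure spread in the $V$- or $V^\perp$-direction) and the angular condition $\langle u_V, y\rangle < 0$. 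I expect this last step to be the main obstacle: one must carefully interlock the strict sub/super-harmonicity of the radial power function with the angular constraint to extract the claimed inequality, with the sign of $\alpha - 2$ controlling which direction the inequality goes.
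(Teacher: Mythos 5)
Your construction correctly secures the easy parts of the lemma: membership in ${\rm SH}(y)$, $R$-equivalence, and $G(x,\phi_y)=G(x,\psi_y)$ all follow from $Rx=x$, $Ry=y$ and $|Rz|=|z|$, and your reduction of $\nabla_u G$ to the component $u_V$ is sound. But the decisive step --- the strict sign of the derivative --- is not merely left open; it cannot be closed for the $\phi_y$ you chose. After your reduction, the sign of $\nabla_u G(x,\phi_y)$ is that of $\int k\,d\phi_y - k(y)$ with kernel $k(z)=|x-z|^{\alpha-2}\langle x-z,u_V\rangle$. Translating the computation \eqref{laplacian} to be centered at $x$ and using $\langle x,u_V\rangle=0$, one finds $\Delta_z k = -(\alpha-2)(\alpha+d-2)\,|x-z|^{\alpha-4}\langle z,u_V\rangle$: for $0<\alpha<2$ the kernel is strictly superharmonic only on the half-space $\{\langle z,u_V\rangle<0\}$ and strictly \emph{sub}harmonic on the other side. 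Your reflection $R$ about $\mathrm{span}(x,y)$ preserves $\langle z,u_V\rangle$, so it does nothing to confine or cancel the mass of $\phi_y$ lying in the bad half-space, and no Jensen-type argument gives one-sided control there. Concretely, by Dynkin's formula $\int k\,d\phi_y-k(y)=\tfrac12\,\E\int_0^\tau\Delta k(B^y_s)\,ds$, and one can take $\tau$ to be the exit time from an $R$-invariant domain consisting of a thin tube from $y$ leading into a large ball placed well inside $\{\langle z,u_V\rangle>0\}$; this makes the expectation positive, the resulting $\psi_y$ is already $R$-invariant so your $\phi_y=\psi_y$, and the claimed inequality fails. (Your construction also degenerates entirely in $d=2$, where $R=\mathrm{Id}$, whereas the lemma is used there too.)

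The missing idea is that the symmetrization must be taken with respect to the hyperplane $H=u^\perp$ through the origin (which contains $x$), not the plane $\mathrm{span}(x,y)$, and it must be combined with first \emph{stopping} the diffusion at $H$. The paper realizes $\psi_y$ by a stopping time $\tau$, sets $\sigma_y=\mathrm{Law}(B^y_{\tau\wedge\eta})$ with $\eta$ the hitting time of $H$, and then restarts the remaining diffusion randomized by a coin flip between the original continuation and its reflection in $H$. Since the kernel $z\mapsto\nabla_u c(x,z)$ is odd under reflection in $H$ (because $x\in H$), the symmetrized continuation contributes nothing to $\nabla_u G$, so $\nabla_u G(x,\phi_y)=\nabla_u G(x,\sigma_y)$, and $\sigma_y$ is a nontrivial measure supported entirely in the closed half-space where the kernel is strictly superharmonic, which yields the strict sign; $G$-value and $R$-equivalence are preserved because $H$ contains both the origin and $x$. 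You would need to incorporate this stopping-and-reflecting step: the plane of symmetry you picked is the wrong one for controlling the derivative in the direction $u$.
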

\begin{proof} 
Let $c(x,z) = |x-z|^{\alpha}$ and take its partial derivative $\frac{\partial}{\partial {x_d}}$ at $x=0$, to obtain
\begin{align}\label{sh9}
h(z) := \nabla_{e_d} c (x,z) \big|_{x=0} = - \alpha \, |z|^{\alpha - 2}\, z_d
\end{align}
Taking the Laplacian in $z$, we get
\begin{align}\label{laplacian}
\Delta \,h(z) = -\alpha (\alpha - 2) (\alpha + d - 2) \,|z|^{\alpha -4} \, z_d.
\end{align}
We see that the function $h$ is
\begin{enumerate}[i)]
\item strictly superharmonic in the lower half-space $\{z_d < 0\}$ if $0 < \alpha <2$
\item strictly subharmonic in the lower half-space $\{z_d < 0\}$ if  $\alpha >2.$
 \end{enumerate}
Let $0 < \alpha < 2$, and assume without loss of generality that $x=x_1\, e_1=(x_1,0,...,0)$ and $u = e_d$. Then $\langle u, y \rangle <0$, which means that $y$ is in the lower half-space. Let $H=\{ z \in \R^d : z_d = 0\}$ and choose a stopping time $\tau$ for the Brownian motion $B^y$ such that $ {\rm Law}(B^y_\tau) = \psi_y$, and let $\eta$ be the first time $B^y$  hits $H$. Let $\sigma_y =  {\rm Law}(B^y_{\tau \wedge \eta})$. Then $\sigma_y$ is supported in the closed lower half-space and it is nontrivial, hence by the strict superharmonicity \eqref{laplacian}, we have
\begin{align*}
\nabla_u G(x, \sigma_y) < 0.
\end{align*}

Now we modify $\tau$ to $\tau'$ in the following way; if $\tau \leq \eta$, then we let $\tau' = \tau$. But if $\tau > \eta$, in other words if a particle at $y$ has landed on $H$ but not completely stopped by $\tau$, then we symmetrize the remaining time of $\tau$ (i.e. the conditional stopping time) with respect to $H$ and get $\tau'$. More precisely, let $\tau_H$ be the reflection of the  conditional stopping time of $\tau$ with respect to  $H$; that is, if $\tau$ stops a path emanating from $H$, then $\tau_H$ stops the reflected path at the same time. Now define $\tau' := \frac{\tau+\tau_H}{2}$ to be a randomization; before re-starting Brownian motion on $H$, we flip a coin and choose either $\tau$ or $\tau_H$ for the conditional stopping time.

Now, define $\phi_y =  {\rm Law}(B^y_{\tau'})$ and observe that
\begin{enumerate}[i)]
\item $G(x, \phi_y) = G(x, \psi_y)$ and $\phi_y \cong_R \psi_y$, by the symmetry with respect to $H$ in the definition of $\tau'$.

\item $\nabla_u G(x, \phi_y) = \nabla_u G(x, \sigma_y)$, since the function $z \mapsto \nabla_{e_d} c (x,z)$ is odd in $z_d$ (see \eqref{sh9}) hence the symmetrization in the definition of $\tau'$ does not change $\nabla_u G$.
\end{enumerate}
This proves the lemma.
\end{proof}
 Notice that the above lemma  
 implies, in particular for $0< \alpha <2$, that
\begin{align}\label{eq:infinitesimal}\nonumber
0 & <  G(x, \psi_y) - G(z, \phi_{y}) = C(x \to \psi_y : z \to {y}) - C(x \to y : z \to \phi_{y}) \\\nonumber
& \text{if $z = x + \epsilon u $ with $\langle u, y \rangle <0$,   for small $\epsilon >0$, where $\epsilon$ depends on $x$ and $\psi_y$.}
\end{align}
In the following proposition, we make this result less infinitesimal. This will be crucial in the next section to prove our main theorem.
\begin{proposition}\label{consequence}
Let $\phi_y \in {\rm SH}(y) \setminus \{\delta_y\}$ be given, and define
\begin{align*}
\underline{G} (x) &:= \displaystyle\min_{\sigma_y \in {\rm SH}(y), \sigma_y \cong_R \phi_y}  \int  |x-z|^\alpha d\sigma_y (z) - |x - y|^\alpha,\\
\overline{G} (x) &:= \displaystyle\max_{\sigma_y \in {\rm SH}(y), \sigma_y \cong_R \phi_y}  \int  |x-z|^\alpha d\sigma_y (z) - |x - y|^\alpha.
\end{align*}
Then $\underline{G}, \overline{G}$ are attained, and for $x_0, x_1, y \in \R^d$ with $|x_0| = |x_1|$ and $|x_0 - y| < |x_1 - y|$,
\begin{align*}
&\underline{G} (x_0) > \underline{G} (x_1) \quad \text{and} \quad \overline{G} (x_0) > \overline{G} (x_1)   \quad \text{if} \quad 0 < \alpha <2,\\
&\underline{G} (x_0) < \underline{G} (x_1) \quad \text{and} \quad \overline{G} (x_0) < \overline{G} (x_1)   \quad \text{if} \quad\quad\,\,\,\, \, \alpha > 2.
\end{align*}
\end{proposition}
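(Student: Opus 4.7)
The plan is to upgrade the infinitesimal content of Lemma~\ref{key} to a genuine monotonicity of $\underline{G}$ and $\overline{G}$ along the sphere $S_r=\{|x|=r\}$ with $r=|x_0|=|x_1|>0$, by combining compactness/continuity with a local variational step and a path argument. First, I would show that the constraint set $K:=\{\sigma\in{\rm SH}(y):\sigma\cong_R\phi_y\}$ is weakly compact: every $\sigma\in K$ lives in the fixed compact shell $\{z:|z|\in\supp(\lambda_\#\phi_y)\}$, and the order condition $\int f\,d\sigma\ge f(y)$ for $f\in{\cal SH}(\R^d)$ passes to weak limits on a fixed compact set. Joint continuity of $(x,\sigma)\mapsto G(x,\sigma)$ together with Berge's maximum theorem then gives attainment of $\underline{G}(x),\overline{G}(x)$ and their continuity in $x$.

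For the local variational step (treating $0<\alpha<2$; the case $\alpha>2$ is identical with signs reversed), fix $x\in S_r$ and a unit tangent $u$ to $S_r$ at $x$ with $\langle u,y\rangle<0$. For $\underline{G}$: take a minimizer $\sigma^*\in K$; the assumption $\phi_y\ne\delta_y$ forces $\sigma^*\ne\delta_y$ (by the barycenter condition combined with $R$-equivalence), so Lemma~\ref{key} produces $\tilde\sigma\in K$ with $G(x,\tilde\sigma)=\underline{G}(x)$ and $\nabla_u G(x,\tilde\sigma)<0$. Along any smooth curve $\gamma\subset S_r$ through $x$ tangent to $u$,
$$\underline{G}(\gamma(t))\le G(\gamma(t),\tilde\sigma)<G(x,\tilde\sigma)=\underline{G}(x) \quad \text{for small } t>0.$$
For $\overline{G}$: applying Lemma~\ref{key} naively to a maximizer at $x$ would only yield $G(\gamma(t),\tilde\sigma)<\overline{G}(x)$, which does not bound $\overline{G}(\gamma(t))=\max_\sigma G(\gamma(t),\sigma)\ge G(\gamma(t),\tilde\sigma)$ from above. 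Instead, at a point $\gamma(t)$ with $t>0$, apply Lemma~\ref{key} to a maximizer $\sigma^*_t$ there, obtaining $\tilde\sigma_t$ with $\nabla_{u_t}G(\gamma(t),\tilde\sigma_t)<0$, where $u_t$ is the unit tangent at $\gamma(t)$ along $\gamma'(t)$. Using $\nabla_{-u_t}G(\gamma(t),\tilde\sigma_t)>0$ and that $-u_t$ is the direction from $\gamma(t)$ back toward $\gamma(t-s)$,
$$\overline{G}(\gamma(t-s))\ge G(\gamma(t-s),\tilde\sigma_t)>G(\gamma(t),\tilde\sigma_t)=\overline{G}(\gamma(t)) \quad \text{for small } s>0.$$

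To globalize, pick a smooth path $\gamma:[0,1]\to S_r$ from $x_0$ to $x_1$ with $t\mapsto|\gamma(t)-y|$ strictly increasing; this is possible since $|x-y|^2=r^2+|y|^2-2\langle x,y\rangle$ on $S_r$ is a smooth, non-constant function that separates $x_0$ and $x_1$ on the connected level sets, so a path through intermediate level sets can be constructed explicitly. Then $\langle\gamma'(t),y\rangle<0$ for every $t$, so the local step applies pointwise along $\gamma$: $\underline{G}\circ\gamma$ has strict right-decrease at every $t\in[0,1)$, and $\overline{G}\circ\gamma$ has strict left-decrease at every $t\in(0,1]$. Combined with continuity, a standard one-dimensional argument forces both compositions to be strictly decreasing on $[0,1]$: if a continuous $h:[0,1]\to\R$ had $h(t_1)\le h(t_2)$ for some $t_1<t_2$, then its maximum on $[t_1,t_2]$ would be attained at some $t^*\in[t_1,t_2]$, and either $t^*\in(t_1,t_2]$ contradicts the local left-decrease at $t^*$, or $t^*=t_1$ together with $h(t_1)=h(t_2)=\max$ contradicts the local left-decrease at $t_2$; an analogous argument handles right-decrease. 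This yields $\underline{G}(x_0)>\underline{G}(x_1)$ and $\overline{G}(x_0)>\overline{G}(x_1)$, and the case $\alpha>2$ follows identically with all inequalities flipped. The main obstacle is the $\overline{G}$ case: Lemma~\ref{key} applied at $x$ gives the wrong inequality; the resolution is the shift to applying it at the downstream point $\gamma(t)$ and extracting a strict inequality for $\overline{G}$ at the upstream point $\gamma(t-s)$.
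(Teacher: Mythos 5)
Your proposal is correct and follows essentially the same route as the paper's proof: continuity and attainment via compactness of the $R$-equivalence class, a curve on $S_r$ along which $|x(t)-y|$ strictly increases, Lemma~\ref{key} applied to a minimizer with a forward difference quotient for $\underline{G}$ and to a maximizer at the downstream point with a backward difference quotient for $\overline{G}$, and a one-dimensional continuity argument to globalize. The only point you gloss over is that a curve from $x_0$ to $x_1$ with $|\gamma(t)-y|$ strictly increasing need not exist in general; the paper handles this by replacing $x_1$ with $Mx_1$ for an orthogonal $M$ fixing $y$, which leaves $\underline{G}(x_1)$ and $\overline{G}(x_1)$ unchanged.
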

\begin{proof}
Define
\begin{align*}
 \underline{\Re}(x, \phi_y) &:= \bigg{\{}\psi_y \in {\rm SH}(y) : \psi_y \in \arg\displaystyle\min_{\sigma_y \cong_R \phi_y}  \int  |x-z|^\alpha d\sigma_y (z)\bigg{\}}, \\
 \overline{\Re}(x, \phi_y) &:= \bigg{\{}\psi_y \in {\rm SH}(y) : \psi_y \in \arg\displaystyle\max_{\sigma_y \cong_R \phi_y}  \int  |x-z|^\alpha d\sigma_y (z)\bigg{\}}.
 \end{align*}
We will soon show that these are nonempty. Note that, by definition
\begin{align*}
\underline{G} (x) &= G (x, \psi_y) =  \int  |x-z|^\alpha d\psi_y (z) - |x - y|^\alpha \quad \text{for any} \quad \psi_y \in \underline{\Re}(x, \phi_y),\\
\overline{G} (x) &= G (x, \psi_y) =  \int  |x-z|^\alpha d\psi_y (z) - |x - y|^\alpha \quad \text{for any} \quad \psi_y \in \overline{\Re}(x, \phi_y).
\end{align*}
First, we claim that $\underline{G}(x)$ and $\overline{G}(x)$ are continuous.\\
Indeed, let $x_n \rightarrow x$ in $\R^d$, and define
\begin{align*}
\underline{C} (x) := \underline{G} (x) + |x-y|^\alpha =  \int  |x-z|^\alpha d\psi_y (z)\quad \text{for any}\quad \psi_y \in \underline{\Re}(x, \phi_y).
\end{align*}
Set $a_n = \underline{C} (x_n)$ and $a = \underline{C} (x)$. 
Choose any subsequence $\{a_k\}$ of $\{a_n\}$ and  a corresponding sequence of measures $\psi_k \in \underline{\Re}(x_k, \phi_y) $. By compactness, $\{\psi_k\}$ has a subsequence $\{\psi_l\}$ which converges to, say $\psi$. Note that $\psi \in {\rm SH}(y)$ and $\psi \cong_R \phi_y$ by weak convergence. Now, write
\begin{align*}
\int  |x_l-z|^\alpha d\psi_l (z) - \int  |x - z|^\alpha d\psi (z) 
&= \bigg[ \int  (|x_l-z|^\alpha - |x - z|^\alpha) \,d\psi_l (z) \bigg] \\
&\quad + \bigg[\int  |x - z|^\alpha d\psi_l (z) - \int  |x - z|^\alpha d\psi (z) \bigg].
\end{align*}
The first bracket goes to zero as $l \rightarrow \infty$ since $|x_l-z|^\alpha - |x - z|^\alpha \rightarrow 0$ uniformly on every compact set in $\R^d$, and the second bracket goes to zero since $\psi_l \rightarrow \psi$.\\
Now we claim that 
\begin{align}
\int  |x - z|^\alpha d\psi (z) = \underline{C} (x) = a, \quad \text{i.e.}\quad \psi \in \underline{\Re}(x, \phi_y).
\end{align}
If not, then there exists a $\rho \in \underline{\Re}(x, \phi_y)$ such that 
\begin{align*}
\int  |x - z|^\alpha d\psi (z) &> \int  |x - z|^\alpha d\rho (z), \text{ hence} \\ 
\int  |x_l - z|^\alpha d\psi_l (z) &> \int  |x_l - z|^\alpha d\rho (z) \text{ \,for all large $l$,}
\end{align*}
a contradiction since $\psi_l \in \underline{\Re}(x_l, \phi_y)$. Therefore, $a_n \rightarrow a$, since this holds for any subsequence. 

To complete the proof of the proposition, we let $0 < \alpha < 2$. Note that $x_0, x_1, y$ are nonzero. Let $r= |x_0|=|x_1|$. Without loss of generality, we can assume that there exists a differentiable curve $x(t): [0, 1] \rightarrow S_r$ with $x(0) = x_0, \,x(1) = x_1$ such that $|\,x(t) - y\,|$ is strictly increasing. In other words, $x(t)$ satisfies that $|x(t)|=r$ and $\langle \frac{d}{dt}x(t), y \rangle <0$ for all $t$. We note that, although finding such a curve is not always possible, we can always choose an alternative point $x_1' = Mx_1$ for some orthogonal matrix $M$ with $My=y$, so that there is a geodesic curve on the sphere $S_r$ connecting $x_0$ and $x_1'$ with the desired property. Notice that $\underline{G}(x_1) = \underline{G}(x_1')$, hence such a change does not affect the conclusion.

Now for a fixed $t \in [0,1]$, choose any $\psi_y \in \underline{\Re}(x(t), \phi_y) $. Then Lemma \ref{key} gives $\sigma_y \in \underline{\Re}(x(t), \phi_y) $ with $\frac{d}{dt} G(x(t), \sigma_y) < 0$. By definition, $\underline{G}(x(s)) \leq G(x(s),\sigma_y)$ for any $s$, and $\underline{G}(x(t)) = G(x(t), \sigma_y)$. Hence
\begin{align*}
\limsup_{\epsilon \downarrow 0} \frac{\underline{G}(x(t+\epsilon)) - \underline{G}(x(t))}{\epsilon} &\leq 
\limsup_{\epsilon \downarrow 0} \frac{G(x(t+\epsilon), \sigma_y) - G(x(t), \sigma_y)}{\epsilon} \\
&= \frac{d}{dt} G(x(t), \sigma_y) < 0.
\end{align*}
The function $\underline{G} (x(t))$ is continuous and satisfies the above strict inequality for each $t \in [0,1]$, hence it must be strictly decreasing. \\
For $\overline{G} (x(t))$, we similarly use $\sigma_y \in \overline{\Re}(x(t), \phi_y) $ and $\frac{d}{dt} G(x(t), \sigma_y) < 0$ to get
\begin{align*}
\liminf_{\epsilon \downarrow 0} \frac{\overline{G}(x(t - \epsilon)) - \overline{G}(x(t))}{\epsilon} &\geq 
\liminf_{\epsilon \downarrow 0} \frac{G(x(t - \epsilon), \sigma_y) - G(x(t), \sigma_y)}{\epsilon} \\
&= -  \frac{d}{dt} G(x(t), \sigma_y) > 0.
\end{align*}
We again see that $\overline{G} (x(t))$ is strictly decreasing. 

The case $\alpha >2$ can be proved in a similar fashion, and the proposition follows.
\end{proof}

\subsection{Optimal Stopping problem for radially symmetric marginals}\label{sec:hitting}

Finally, armed with Lemma~\ref{key} and 
Proposition \ref{consequence},   we establish   Theorem \ref{thm:main}. 
\begin{proof}[\bf Proof of Theorem \ref{thm:main}]   We give a proof for the minimization problem for the case $0< \alpha <2$;  the case $\alpha > 2$, or the maximization problem, 
can be proved similarly.

Fix $0< \alpha<2$, and let $\tau$ be a minimizer for  \eqref{opt}. 
%{\red We first symmetrize $\tau$ to $\bar \tau$. }
For $x, y \neq 0$ in $\R^d$ with $x \not\parallel y$, (here $\parallel$ denote parallelism), we define the barrier set:
$$U_x^y = \{z \in \R^d : |z| = |y| \text{ and } \langle x, z \rangle > \langle x, y \rangle \}.$$ 
 The set $U_x^y$ looks like the spherical cap of radius $|y|$, which is symmetric around the axis in the direction of $x$, containing the point $y$ in its boundary.

We shall say that a pair $(f, s)$ and $(g, t)$ in  $S$ is {\em forbidden} if  
$$ 
f(0)=g(0) \neq 0 \text{ and } \exists s' < s \text{ such that } f(s') \in U_{g(0)}^{g(t)}.
$$
In words, a forbidden pair consists of a path that penetrates 
 the barrier generated by the other path.
We let ${\bf FP}$ denote the set of forbidden pairs. %The proof consists of two parts:\\

 First, we show that there exists  $\Gamma \subset S$ on which $\tau$ is concentrated, such that $\Gamma$ does not admit any ``forbidden pair" that lies in $ \Gamma \times \Gamma$. 
 Indeed, choose the  $c$-monotone $\Gamma$ for $\tau$ as given by  Proposition~\ref{rmonoprin} and suppose that ${\bf FP} \cap (\Gamma \times \Gamma) \neq \emptyset$,  i.e. there exists a forbidden pair $(f, s)$ and $ (g,t)$ in $\Gamma$ where $(f,s)$ penetrates the barrier $U_{g(0)}^{g(t)}$, that is, not stopping when it hits the set.  Then, by the Markov property, the penetrating path $(f, s)$  yields a nontrivial subharmonic measure,  say $\psi_y \in {\rm SH} (y)$, namely the conditional probability, whose barycenter is at the point $y=f^x(s')$ where the barrier $U_{g(0)}^{g(t)}$ is hit. But this contradicts Proposition \ref{consequence} and Proposition ~\ref{rmonoprin}.  Hence, ${\bf FP} \cap (\Gamma \times \Gamma) = \emptyset$. 
 
 Now, we show that  since $\Gamma$ does not allow forbidden pairs, then every stopping time concentrated on $\Gamma$ is necessarily non-randomized, which clearly yields the uniqueness. Indeed, let $\xi$ be any stopping time in ${\cal T}(\mu, \nu)$ with $\xi(\Gamma) = 1$. Define $\Gamma_0 = \{(f^x, s)\in \Gamma \ | \ s=0\}$, i.e. $\Gamma_0$ consists of the paths that are stopped immediately at time $0$. Let $\Gamma_+ = \Gamma \setminus \Gamma_0$. We can assume $x \neq 0$ for every $(f^x, s) \in \Gamma$ as $\mu (\{0\}) =0$. Moreover, because $d\ge 3$, the probability of the Brownian motion from $x$ to hit  a line segment is zero, so we can assume that $f^x(s) \not \parallel x$.\footnote{The assumption $d\ge 3$ is used here only.} Let $\xi_0$ be the restriction of $\xi$ on $\Gamma_0$, and $\xi_+$ be on $\Gamma_+$.

We claim that since ${\bf FP} \cap (\Gamma \times \Gamma) = \emptyset$,  $\xi_+$ must be of non-randomized type. \\
Let us suppose the contrary. Then there exists an element $(f^x, s) \in \Gamma$, $ s>0, x\ne 0$,  such that the conditional stopping time $\xi^{(f^x, s)}$ is nonzero. 
This means that the Brownian motion which has followed the path $f^x$ up to time $s>0$ will continue its motion at $y := f^x(s)$. Now consider the barrier $U := U_x^y$ and note that the Brownian motion starting at $y$ governed by any non-zero stopping time will go through the surface $U$ before its complete stop, since $y$ is on the boundary  of $(d-1)$-dimensional set $U$. This implies that there is a stopped path $(g^y, t) \in S$ such that the concatenation $(f^x\oplus g^y, s+t) \in \Gamma$ and for some $s' < s+t$, $(f^x\oplus g^y) (s') \in U$. This means that the pair $((f^x\oplus g^y, s+t), (f^x, s)) \in \Gamma \times \Gamma$ is a forbidden pair, which is a contradiction. 

The separation assumption $\mu \wedge \nu=0$ implies $\xi_0=0$, yielding that $\xi$ is non-randomized. This implies the uniqueness of $\xi$ in the usual way, that is if $\tau$ and $\tau'$ are two minimizers and if  their disintegrations do not agree, i.e. $\tau_{\omega^x} \neq \tau'_{\omega^x}$ for all $\omega^x \in B$ with $\P(B) >0$, then the stopping time  $\frac{{\tau} + {\tau'}}{2}$, which is obviously a minimizer, must be of randomized type, thus  yielding a contradiction. This concludes the proof. 
 \end{proof}
\begin{remark}
In fact the proof of Theorem \ref{thm:main} shows that, if the radial monotonicity principle holds without the assumption $\mu \wedge \nu=0$, then every optimal stopping time $\tau$ is  decomposed into two stopping measures as $\tau= \tau_0 + \tau^*$, where $\tau_0$ is supported  at time $T=0$ while $\tau^*$ is supported in $T >0$ and is non-randomized. %Note that this implies the uniqueness of $\tau$ under the assumption $\mu \wedge \nu=0$, as this forces $\tau_0=0$.
\end{remark}

\begin{remark}\label{rmk:barrier}
 Let $\Gamma$ be the $c$-monotone set as given in Proposition~\ref{rmonoprin} on which the optimizer in Theorem \ref{thm:main} is concentrated. The proof of Theorem \ref{thm:main} in fact tells us that, for the minimization problem with  $0 < \alpha < 2$ or the maximization problem with $\alpha > 2$, 
 the optimal stopping time  is given by the first time Brownian motion $B^x$ hits the following union of barriers
$$\mathcal{U}_x := \cup_y U_x^y,  \text{ where } y = f^x(s), \text{$y\not\parallel x$}, \text{ for some } (f^x, s) \in \Gamma. $$
Moreover, by the uniqueness property and the radial symmetry of $\mu$ and $\nu$, the sets $\mathcal{U}_x$'s  are congruent under rotation, that is if $M$ is an orthogonal matrix and $M(x) = x'$, then $M(\mathcal{U}_x) = \mathcal{U}_{x'}$.

For minimization problem with $\alpha > 2$
or maximization problem with $0 < \alpha < 2$, 
we have the same type of result, but the barrier will be reversed: it will be given by
$$\mathcal{V}_x := \cup_y V_x^y, \text{ where } y = f^x(s), \text{$y\not\parallel x$,}  \text{ for some } (f^x, s) \in \Gamma, $$
where $V_x^y$ is the reversed barrier
$$V_x^y = \{z \in \R^d : |z| = |y| \text{ and } \langle x, z\rangle < \langle x, y \rangle \}.$$ 

This is due to the interchange of the superharmonic and subharmonic region of the derivative of the gain function \eqref{sh9}, according to the value of $\alpha$. Also note that when dealing with  maximization problem, the inequalities \eqref{sh5} and \eqref{sh6} 
%{\red \eqref{eq:reflection monotone}}
in the monotonicity principles must be reversed.
\end{remark}

\begin{remark}\label{rmk:barrier-2d}
 Observe that in the above proof, the argument breaks down in the two dimensional case, because the probability for a Brownian motion to hit a line segment is not zero when the dimension $\le 2$.  Still, it shows that the optimal (possibly randomized) stopping time $\tau$ has to stop the Brownian path completely once it stops at a point not parallel to the initial point. Therefore, in particular, for the minimization problem with $0<\alpha <2$, the set 
\begin{align*}
&\mathcal{U}_x := \cup_y U_x^y, \text{ where } y = f^x(s),  y\not\parallel x,   \text{ for some } (f^x, s) \in \Gamma \quad  
\end{align*}
 is a barrier. However, there are chances that the Brownian path may stop only partially at points parallel to $x$, then to continue until it hits the set $\mathcal{U}_x $. %
Similar result holds for the maximization problem or with $\alpha>2$.  
\end{remark}

 Finally, we note that the ideas in this section can be applied to costs that are more general than the ones of the form $|x-y|^\alpha$ considered in this paper. In particular, they apply to a class of cost functions $c(x,y)$ that are invariant under rotation and whose directional derivatives $ \nabla_u c(x,y)$ in the $x$-variable have suitable sub/superharmonic regions in the $y$-variable. 

 \section{Subharmonic martingale optimal transport problem}% and its connection with Skorokhod embedding problem}
 \label{SMOT}
 In the previous section we focused on the structure or optimal stopping times, utilizing the radial symmetry of marginals. From now on we do not assume the radial symmetry but consider general marginals. We assume compact support, but see Proposition \ref{prop1} for an exception.

As mentioned in the introduction, a subharmonic function on an open subset of $\R^d$ may not allow a subharmonic extension on all of $\R^d$ in general. When can $f \in {\cal SH}(O)$ be approximated by $\tilde f \in {\cal SH}(\R^d)$ in $O$? To give an answer, we recall the following. %Walsh's theorem.
\begin{lemma}[Walsh \cite{W29}] Let $K$ be a compact subset of $\R^d$ such that $\R^d \setminus K$ is connected. Then for each $\eps >0$ and a harmonic function $u$ on an open set containing $K$, there exists a harmonic polynomial $v$ such that $|u-v| < \eps$ on $K$.
\end{lemma}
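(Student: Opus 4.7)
The plan is to prove this as the harmonic analogue of Runge's theorem, proceeding in three stages: integral representation, pole-pushing, and expansion at infinity. Throughout let $\Phi$ denote the Newtonian kernel, normalized so that $-\Delta \Phi = \delta_0$, i.e.\ $\Phi(x) = c_d |x|^{2-d}$ for $d \geq 3$ and $\Phi(x) = -\frac{1}{2\pi}\log|x|$ for $d=2$.

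First I would replace the open neighborhood on which $u$ is harmonic by a smooth bounded open set $D$ with $K \subset D \subset \bar D \subset U$. Green's third identity gives, for $y \in D$,
\begin{equation*}
u(y) \;=\; \int_{\partial D}\Big[u(z)\,\partial_{\nu_z}\Phi(y-z) - \Phi(y-z)\,\partial_{\nu_z} u(z)\Big]\,d\sigma(z).
\end{equation*}
Since $K$ and $\partial D$ are disjoint compacta, the integrand is jointly continuous in $(y,z) \in K \times \partial D$, so a Riemann-sum approximation yields
\begin{equation*}
v_1(y) \;=\; \sum_{j=1}^N \big[a_j\,\Phi(y - z_j) + b_j\!\cdot\!\nabla_y\Phi(y - z_j)\big]
\end{equation*}
with $z_j \in \partial D \subset \R^d \setminus K$ and $\sup_K |u - v_1| < \eps/2$; each summand is harmonic on $\R^d \setminus \{z_j\}$.

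Next I would push each pole $z_j$ to infinity. Using connectedness of $\R^d \setminus K$, choose a path from $z_j$ to a point $z_j^*$ of arbitrarily large modulus, and cover it by a chain of open balls $B_1,\ldots,B_m$ in $\R^d\setminus K$ with consecutive balls overlapping. The key identity $\partial_{z^\alpha}\Phi(y-z) = (-1)^{|\alpha|}\partial_{y^\alpha}\Phi(y-z)$ shows that the Taylor expansion in $z$, at a fixed center $\zeta_i \in B_i$, of $\Phi(y-z)$ has coefficients that are themselves $y$-derivatives of $\Phi(\cdot - \zeta_i)$, hence are harmonic in $y$ off $\zeta_i$. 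Because $K$ lies outside each $B_i$, standard bounds on $\nabla^\alpha\Phi$ away from the singularity give uniform convergence on $K$, so one can replace a pole at one point of $B_i$ by a pole at any other point with arbitrarily small error on $K$. Iterating through the chain moves the pole from $z_j$ to $z_j^*$ with total error below $\eps/(4N)$; doing this for every $j$ yields $v_2$ with $\sup_K|u-v_2|<3\eps/4$ that is built from finitely many Newtonian potentials and their derivatives with poles of arbitrarily large modulus.

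Finally, for a pole $z$ with $|z|$ larger than twice $\sup_K|y|$, expand $\Phi(y-z)$ as a uniformly convergent series of harmonic polynomials in $y$: for $d\ge 3$ this is the classical Gegenbauer expansion $|y-z|^{2-d} = \sum_{k\ge 0} |z|^{-(d-2+k)} C_k^{(d/2-1)}(\langle \hat y,\hat z\rangle)|y|^k$, and for $d=2$ the expansion of $\log|y-z|$ in the real parts of $(y/z)^k$ (identifying $\R^2$ with $\C$). Each partial sum is a harmonic polynomial in $y$; derivatives in $y$ of such partial sums remain harmonic polynomials. Truncating each of the finitely many series in $v_2$ at a sufficiently high order and summing produces the desired harmonic polynomial $v$ with $\sup_K|u-v|<\eps$.

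The main obstacle is the pole-pushing step: one must simultaneously verify that the Taylor partial sums in $z$ are bona fide harmonic functions in $y$ (which the identity $\partial_z^\alpha\Phi(y-z)=(-1)^{|\alpha|}\partial_y^\alpha\Phi(y-z)$ secures) and that the geometric separation between $K$ and each ball $B_i$ is quantified well enough to give uniform convergence; a mild technicality is that the chain of balls must be chosen so that $K$ stays uniformly far from each ball, which follows from compactness of the path together with a standard open-cover refinement.
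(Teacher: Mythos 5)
The paper does not prove this lemma at all: it is quoted verbatim as a classical result of Walsh (1929), so there is no in-paper argument to compare against. Your proposal is the standard Runge-type proof of that theorem and is essentially correct: Green's representation plus a Riemann sum reduces $u$ on $K$ to a finite combination of Newtonian potentials and their first derivatives with poles on $\partial D \subset \R^d\setminus K$; connectedness (hence path-connectedness) of the open set $\R^d\setminus K$, together with unboundedness of that set, lets you push each pole to arbitrarily large modulus; and the Gegenbauer (resp.\ logarithmic) expansion at infinity converts each far-away potential into a uniformly convergent series of harmonic polynomials.

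Two points deserve slightly more care than you give them, though both are standard. First, in the pole-pushing step it is not enough that $K$ lies outside each ball $B_i=B(\zeta_i,r_i)$: for the Taylor series of $z\mapsto\partial_y^\alpha\Phi(y-z)$ about $\zeta_i$ to converge uniformly over $y\in K$ and $z$ in the next ball, you need a strict ratio $|z-\zeta_i|\le\theta\,\mathrm{dist}(\zeta_i,K)$ with $\theta<1$; this is arranged by taking each $r_i\le\tfrac12\mathrm{dist}(\zeta_i,K)$ and refining the chain, exactly the "open-cover refinement" you flag. Second, your $v_1$ contains gradient terms $b_j\cdot\nabla_y\Phi(y-z_j)$, so the pole-pushing and the expansion at infinity must be carried out for $\partial_y^\alpha\Phi$ with $|\alpha|\le1$, not just for $\Phi$ itself; your identity $\partial_z^\alpha\Phi(y-z)=(-1)^{|\alpha|}\partial_y^\alpha\Phi(y-z)$ handles the former, and for the latter one either expands $\nabla_y\Phi(y-z)$ directly or invokes interior gradient estimates for harmonic functions to upgrade uniform convergence on a neighborhood of $K$ to convergence of derivatives on $K$. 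With those caveats made explicit, the argument is complete.
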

Keeping this in mind, we turn to the proof of Theorem \ref{thm:main2}. First we need the following very likely known lemma.
Given an open set $O$ in $\R^d$, let $C(O)$ be the space of continuous functions on $O$. We give a topology on $C(O)$ which is induced by the following convergence, namely, $f_n \to f$ iff $||f_n-f||_{L^\infty(K)} \to 0$ on every compact subset $K$ of $O$. It is clear that this topology is metrizable via the following metric
$$d(f,g) := \sum_{n=1}^\infty 2^{-n}\min(1, ||f-g||_{K_n})$$
where $\{K_n\}$ is a compact exhaustion of $O$; $K_n$ is compact, $K_n \subset {\rm int} (K_{n+1})$, and $\cup_n K_n = O$.
\begin{lemma}\label{separable}
The space ${\cal H}(O)$ of harmonic functions on $O$, is separable under $d$.
\end{lemma}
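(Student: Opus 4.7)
My plan is to derive separability of $\mathcal{H}(O)$ from separability of the ambient space $C(O)$ equipped with the same metric $d$, invoking the standard topological principle that every subspace of a separable metric space is itself separable (a metric space is separable iff second countable, and second countability passes to subspaces). The real work therefore reduces to proving that $(C(O), d)$ is separable, and then $\mathcal{H}(O) \subset C(O)$ inherits the property for free.

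To show $(C(O), d)$ is separable, I would take $\mathcal{D}$ to be the countable collection of polynomials in $d$ real variables with rational coefficients, regarded as elements of $C(O)$ by restriction. Given $f \in C(O)$ and $\varepsilon > 0$, I first pick $N$ large enough that the tail of the defining series satisfies $\sum_{n > N} 2^{-n} < \varepsilon/2$. Applying the Stone--Weierstrass theorem to $C(K_N)$, the restrictions to $K_N$ of elements of $\mathcal{D}$ form a uniformly dense subset, so there exists $p \in \mathcal{D}$ with $\|f - p\|_{L^\infty(K_N)} < \varepsilon/2$. Because $K_n \subset K_N$ for every $n \leq N$, the same bound holds on each such $K_n$, and substituting into the formula defining $d$ gives
\[
d(f,p) \;\leq\; \sum_{n \leq N} 2^{-n}\,\tfrac{\varepsilon}{2} + \sum_{n > N} 2^{-n} \;<\; \varepsilon.
\]
Hence $\mathcal{D}$ is dense in $(C(O), d)$, which is therefore separable, and the lemma follows at once.

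There is no serious obstacle to overcome: the argument is purely topological and uses nothing about harmonic functions beyond the inclusion $\mathcal{H}(O) \subset C(O)$. The two minor points that warrant care are (i) checking that the explicit series for $d$ really does induce the topology of uniform convergence on each $K_n$, a routine unwinding of the definition, and (ii) being a little careful about which polynomial approximates $f$ on which $K_n$, which is handled by working with the largest $K_N$ we need to control and using monotonicity $K_n \subset K_N$. An intrinsically ``harmonic'' alternative via the Walsh approximation theorem stated just above would only give a dense set of harmonic polynomials on domains where $\mathbb{R}^d \setminus K_n$ is connected for every $n$, so the generic ambient-space argument above is both cleaner and more robust.
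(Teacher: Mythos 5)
Your proof is correct and follows essentially the same route as the paper: establish separability of $(C(O),d)$ via rational-coefficient polynomials and Stone--Weierstrass, then invoke the fact that subspaces of separable metric spaces are separable. You simply spell out the tail estimate for the metric $d$ in more detail than the paper does.
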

\begin{proof}
The space $C(O)$ is separable under the metric $d$, e.g. the set ${\displaystyle \mathbb {Q} [x_1,...,x_d]}$ of polynomials with rational coefficients is a countable dense subset of $C(O)$ by Stone-Weierstrass Theorem. Then since every subspace of a separable metric space is separable, ${\cal H}(O)$ is separable as well.
\end{proof}

\begin{proof}[Proof of Theorem \ref{thm:main2}]

To prove \eqref{eqn:mainequality}), note that 
if  $\pi \in {\rm SMT}_{\R^d}(\mu,\nu)$, then for any $p \in \Psi$ we have 
\begin{align*}
 \int p(x, y) d\pi (x, y) = \int p (x, y) d\pi_x (y) d\mu(x)  \ge  \int  p(x, x) d\mu (x) =0
 \end{align*}
in view of  the subharmonicity of  of $y \mapsto  p(x, y)$. Hence we have the following inclusions
\begin{align*}
 {\rm SMT}(\mu,\nu) \subset {\rm SMT}_{\R^d}(\mu,\nu) \subset \Big\{\pi \in \Pi(\mu,\nu) \,\Big|\, \int p(x, y)  d\pi (x, y) \ge 0  \quad \forall p \in \Psi \Big\}.
   \end{align*}
Now let $\pi \in \Pi(\mu,\nu)$ and assume $ \int p \,d\pi \ge 0$ for every $ p \in \Psi$.  We now prove that $\pi \in {\rm SMT}(\mu, \nu)$. Indeed, let $\mathbf{1}_{B_{z,r}}$ be the indicator function on the ball $B_{z,r}$ of center $z$ and radius $r$, and consider the functions of the form 
\begin{align*}
 p(x, y) = \mathbf{1}_{B_{z,r}}(x)(\phi (y) - \phi(x)) \quad \text{where} \quad \phi \in {\cal H}(\R^d).
\end{align*}
Then $p(x, y) \in \Psi$\footnote{Technically $\mathbf{1}_{B_{z,r}}(x)$ is not continuous, but it can be approximated by continuous and compactly supported functions.}. For $z \in \supp \mu$, we have
\begin{align*}
 0 &\le \frac{1}{\mu(B_{z,r})}\iint p(x, y) d\pi (x, y) = \frac{1}{\mu(B_{z,r})}\int_{B_{z,r}} \left(\int \phi (y) d\pi_x (y)  - \phi (x) \right) d\mu(x) \nonumber \\
 & \to  \int \phi (y) d\pi_z (y)  - \phi (z) \quad \text{as} \quad r \to 0, \quad \mu - a.e.\, z,
\end{align*}
where the convergence is due to e.g. \cite[Lemma 4.1.2.]{LY85}. By changing $\phi \mapsto -\phi$ in $p(x,y)$, we deduce
\begin{align}\label{harmoniceq}
\int \phi (y) d\pi_z (y) = \phi (z) \quad \mu - a.e.\, z.
\end{align}
One may notice that the $\mu$ - a.e. convergence set could depend on the choice of $\phi$, but the application of Lemma \ref{separable} then ensures that the equality \eqref{harmoniceq} holds for all $\phi \in {\cal H}(\R^d)$, $\mu$ - a.e..

Let $\psi$ be the fundamental solution of the Laplace equation, and  %as in Lemma \ref{approximation}.
 let $\psi_{a}(x) =\psi(x-a)$ and $\psi_{a,c}(x) = \max(\psi(x-a) , c)$ where $a \in \R^d$ and $c \in \R$. Note that $\psi_{a,c}$ is continuous and subharmonic on $\R^d$. By applying the above argument with $p(x,y)= \mathbf{1}_{B_{z,r}}(x)(\psi_{a,c}(y) - \psi_{a,c}(x))$ and letting $c \searrow -\infty$, we deduce 
\begin{align}\label{kernalineq}
\int \psi_a(y) d\pi_z (y) \ge \psi_a(z) \quad\text{for every } a \in \R^d,  \,\,\mu - a.e.\, z.
\end{align}
Now let $O$ be a regular domain for $\mu,\nu$, and let $K$ be a compact set in $O$ such that $\supp(\mu+\nu) \subset K$ and $\R^d \setminus K$ is connected. Let $O'$ be an open precompact subset of $O$ containing $K$. For each $f \in {\cal SH}(O)$, there exists a nonnegative Borel measure $\kappa$ on $O$ such that $f$ can be decomposed as
$$f(x) = h(x) + \int_{O'}\psi_a(x) d\kappa(a) \quad \forall x \in O', \text{ for some } h \in {\cal H}(O')$$
by Riesz representation theorem. Let $h_\eps$ be a harmonic polynomial such that $|h-h_\eps| < \eps$ on $K$ by Walsh's theorem. Then for $\mu$-a.e. $x$, we have
\begin{align*}
\int f(y)d\pi_x(y) &= \int h(y) d\pi_x(y)+ \int_{\R^d}\int_{O'}\psi_a(y) d\kappa(a)d\pi_x(y) \\
& \ge \int h_\eps (y) d\pi_x(y) + \int_{O'}\int_{\R^d}\psi_a(y) d\pi_x(y) d\kappa(a) -\eps  \\
&\ge h_\eps(x) + \int_{O'}\psi_a(x) d\kappa(a) -\eps\\
& \ge h(x)+ \int_{O'}\psi_a(x) d\kappa(a) -2\eps = f(x) -2 \eps.
\end{align*}
Letting $\eps \to 0$ we get $\int f(y)d\pi_x(y)  \ge f(x)$ $\mu$-a.e. $x$, implying $\pi \in {\rm SMT}_O(\mu,\nu)$. As $O \in {\cal R}(\mu,\nu)$ was arbitrary we deduce that $\pi \in {\rm SMT}(\mu,\nu)$. This completes the proof of the equality \eqref{eqn:mainequality}.

Now we prove the equivalence in Theorem \ref{thm:main2}. Notice that $(1) \iff (2)$ is immediate by the same approximation argument as above. 
The direction $(3) \implies (1)$ is also immediate, as for $f \in {\cal SH}(\R^d)$ and $\pi \in {\rm SMT}(\mu,\nu)$,
\begin{align*}
 \int f(y) d\nu (y) = \int f(y) d\pi_x (y) d\mu(x)  \ge  \int  f(x) d\mu (x).
 \end{align*}
To see that $(4) \implies (3)$, let $O$ be an open ball containing $\supp( \mu+\nu)$, and take $\tau \in {\cal T}_O(\mu,\nu)$. Then for $f \in {\cal SH}(\R^d)$, since $(f(B_{t \wedge\tau}))_t$ is a submartingale, we have 
$$\E[f(Y) \ | \ X] \ge f(X), \quad \text{where} \quad X=B_0 \sim \mu, \,\,Y=B_\tau \sim \nu.$$
 This means that the joint law of $(X,Y)$  belongs to ${\rm SMT}(\mu,\nu)$.

It remains to show the implication $(2) \implies (4)$. This will be done in Proposition~\ref{prop1} below, where we prove it for possibly non-compactly supported marginals.  

%{\bf Step 3:}  
Finally, let now $\pi \in {\rm SMT}(\mu,\nu)$ and its disintegration $(\pi_x)_x$ with respect to $\mu$. Let $(B_t)_t$ be a Brownian motion with $B_0 \sim \mu$, and let $O \in {\cal R}(\mu,\nu)$ be bounded.  Since $\delta_x\prec_{s(O)} \pi_x$ for $\mu$-almost all $x$, and noting that $B_0$ and $B_t - B_0$ are independent, one can apply the implication $(2) \Rightarrow (4)$ to the subharmonic ordered pair  $(\delta_{x}, \pi_{x})$, and use the measurable selection theorem, to find %select measurably 
 a stopping time $\tau=\tau\wedge \tau_O$ such that given $B_0$, we have ${\rm Law}(B_\tau \in \, \cdot \, \,|\, B_0) = \pi_{B_0} (\, \cdot \,)$. It is then clear that ${\rm Law} (B_0, B_\tau) = \pi$. 
  \end{proof}
The following is now immediate.

\begin{corollary} \label{same} Let $\mu,\nu \in \P_c(\R^d)$  and assume $\mu \prec_s \nu$. Then
\begin{align*}%\label{opt}
 \inf\left\{ \E\, [c(B_0, B_\tau)]\, | \, \tau \in {\cal T}(\mu, \nu)\right\}=\inf\left\{\iint c(x,y)d\pi \, | \, \pi\in {\rm SMT}(\mu,\nu)\right\}.
 \end{align*}
\end{corollary}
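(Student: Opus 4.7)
The plan is to establish the corollary as a direct consequence of Theorem~\ref{thm:main2}, by showing that the two optimization sets produce the same collection of cost values via a natural correspondence between stopping times in ${\cal T}(\mu,\nu)$ and transport plans in ${\rm SMT}(\mu,\nu)$. The key observation is that for a stopping time $\tau$ and Brownian motion $B$ with $B_0\sim\mu$, $B_\tau\sim\nu$, the cost $\E[c(B_0,B_\tau)]$ depends only on the joint law of $(B_0,B_\tau)$, namely $\iint c(x,y)\,d\pi(x,y)$ where $\pi = \mathrm{Law}(B_0,B_\tau)$.

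For the inequality $\inf\{\E[c(B_0,B_\tau)]\} \ge \inf\{\iint c\,d\pi\}$, I would take any $\tau \in {\cal T}(\mu,\nu)$ and set $\pi := \mathrm{Law}(B_0,B_\tau)$. For any subharmonic $f$ on $\R^d$, the process $(f(B_{t\wedge\tau}))_t$ is a submartingale (bounded from below on compact sets, using optional stopping with $\E[\tau]<\infty$ via a standard localization argument), so $\E[f(B_\tau)\mid B_0] \ge f(B_0)$. Hence $\pi \in {\rm SMT}(\mu,\nu)$ by Theorem~\ref{thm:main2}, and the two integrals agree.

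For the reverse inequality, I would invoke the last sentence of Theorem~\ref{thm:main2}: given any $\pi \in {\rm SMT}(\mu,\nu)$ and any bounded $O \in \mathcal{R}(\mu,\nu)$ (which exists since $\mu,\nu$ are compactly supported), there is a stopping time $\tau = \tau \wedge \tau_O$ with $B_0\sim\mu$, $B_\tau\sim\nu$, and $\pi = \mathrm{Law}(B_0,B_\tau)$. Since $\tau \le \tau_O$ and $O$ is bounded, $\E[\tau] \le \E[\tau_O] < \infty$, so $\tau \in {\cal T}(\mu,\nu)$, and again the two integrals coincide.

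Combining both directions yields equality of the infima. The only real subtlety is the existence/realization step in the second direction, but this is precisely what the final assertion of Theorem~\ref{thm:main2} delivers; the rest is just matching integrals across the two representations of the same joint law.
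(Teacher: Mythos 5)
Your argument is correct and is exactly how the paper handles this statement: the paper gives no separate proof, declaring the corollary ``immediate'' from Theorem~\ref{thm:main2}, and your two directions are precisely the two ingredients of that theorem --- the submartingale inequality showing ${\rm Law}(B_0,B_\tau)\in{\rm SMT}(\mu,\nu)$, and the final realization assertion producing $\tau=\tau\wedge\tau_O$ with $\E[\tau]\le\E[\tau_O]<\infty$. The one step worth phrasing carefully is the optional stopping for a general $f\in{\cal SH}(\R^d)$ when $\tau$ is \emph{not} dominated by an exit time (subharmonic functions are bounded \emph{above}, not below, on compacts, and may grow arbitrarily fast at infinity, so the naive localization does not close by itself); the clean route is to verify the inequality first for subharmonic functions bounded above, such as the Newtonian kernels, where reverse Fatou applies, and then conclude via the characterization \eqref{eqn:mainequality}.
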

The following proposition may have its own interest. Let $\P_2(O)$ be the set of probability measures concentrated in $O$ and with finite second moments. Define the order $\mu \prec_{s_2(O)} \nu$ by
$$\mu \prec_{s_2(O)} \nu \iff \int f d\mu \le \int f d\nu \text{\,\, for every $f\in {\cal SH}(O)$ with } f(x) \le C_f (1+|x|^2).$$
\begin{proposition}\label{prop1} 
Let $\mu, \nu \in \P_2(O)$ and assume $\mu \prec_{s_2(O)} \nu$. Then $\cal{T}_O(\mu,\nu) \neq \emptyset$.
\end{proposition}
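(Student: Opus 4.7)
The plan is to construct the required stopping time via an iterative ball-balayage scheme adapted to the domain $O$, in the spirit of the classical Chacon-Walsh construction \cite{ChWa76}. The elementary step will be: given $\lambda \in \P_2(O)$ with $\mu \prec_{s_2(O)} \lambda \prec_{s_2(O)} \nu$ and $\lambda \neq \nu$, I would produce a closed ball $\bar B \subset O$ and a Borel set $A \subseteq B$ such that the measure $\lambda'$ obtained by replacing $\lambda|_A$ by the exit distribution from $B$ of Brownian motion started at $\lambda|_A$ satisfies $\lambda \prec_{s_2(O)} \lambda' \prec_{s_2(O)} \nu$ with $\lambda' \neq \lambda$. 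The order $\lambda \prec_{s_2(O)} \lambda'$ is automatic from the submartingale property of $f(B_{t\wedge \tau_B})$ for $f \in {\cal SH}(O)$ of quadratic growth; the fact that a suitable $(\bar B, A)$ can be chosen so that $\lambda' \prec_{s_2(O)} \nu$ still holds is extracted from the nontriviality of the subharmonic gap $\nu - \lambda$, which must witness a pointwise difference of some Green-type potential, and at a maximum point of this discrepancy a small ball fits inside $O$ on which the balayage strictly decreases the discrepancy without overshooting $\nu$.

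Iterating this step greedily, for instance by choosing at each stage a balayage that maximizes the drop of an appropriate Lyapunov functional (such as the $L^2$-norm of a truncated Green potential), I obtain sequences $\mu_0 = \mu, \mu_1, \mu_2, \dots$ with $\mu_n \prec_{s_2(O)} \nu$, and increasing stopping times $\tau_n$ for a Brownian motion $(B_t)_t$ with $B_0 \sim \mu$, each satisfying $\tau_n \leq \tau_O$ almost surely and $B_{\tau_n} \sim \mu_n$.  Applying Doob's optional stopping to the martingale $|B_t|^2 - dt$ yields $d\,\E[\tau_n] = \E[|B_{\tau_n}|^2] - \E[|B_0|^2]$; since $y \mapsto |y|^2$ is subharmonic on $\R^d$ with quadratic growth, the order $\mu_n \prec_{s_2(O)} \nu$ gives $\E[|B_{\tau_n}|^2] \leq \int |y|^2\,d\nu < \infty$, and hence $\E[\tau_n]$ is uniformly bounded.

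Setting $\tau := \lim_n \tau_n$, monotone convergence yields $\E[\tau] < \infty$ and $\tau \leq \tau_O$ a.s.  By continuity of Brownian paths, $B_{\tau_n} \to B_\tau$ almost surely on $\{\tau < \infty\}$, and combined with $\mu_n \to \nu$ weakly (ensured by the greedy choice), one concludes $B_\tau \sim \nu$, so $\tau \in \mathcal{T}_O(\mu,\nu)$, proving the claim.

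The main obstacle will be to establish that the iterative scheme makes strict quantitative progress at each step and converges to $\nu$ rather than to some limit measure strictly dominated by $\nu$ in the subharmonic order.  In the compact-support, bounded-domain setting one can track an $L^2$-difference of Green potentials as a Lyapunov functional, but since Proposition \ref{prop1} permits $\mu, \nu$ with non-compact support in a possibly unbounded $O$, a careful exhaustion of $O$ by regular subdomains together with truncation of the potentials will be needed to keep the relevant energy quantities finite along the construction; this is where the finite second moment hypothesis should pay off, controlling the tails via the subharmonic test function $|y|^2$ at every stage.
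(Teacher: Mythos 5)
Your overall architecture (iterated balayage onto spheres inside $O$, uniform bound on $\E[\tau_n]$ via the subharmonic test function $|y|^2$, tightness and passage to a limiting randomized stopping time) matches the paper's in its outer shell, but the core of your argument has a genuine gap that you yourself flag and do not close: you never establish that the iterative scheme actually converges to $\nu$ rather than to some measure strictly below $\nu$ in the subharmonic order. In dimension $d\ge 2$ this is precisely the hard point. The elementary step as you state it is also not justified: from $\lambda \prec_{s_2(O)}\nu$ with $\lambda\neq\nu$ you assert the existence of a ball $\bar B\subset O$ and a set $A$ on which balayage strictly decreases a ``Green-potential discrepancy'' \emph{without overshooting} $\nu$, i.e.\ so that $\lambda'\prec_{s_2(O)}\nu$ still holds. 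Unlike the one-dimensional Chacon--Walsh setting, where the convex order is fully encoded by a single potential function $u_\lambda-u_\nu$ and non-overshooting is a pointwise tangency condition, in higher dimensions the cone ${\cal SH}(O)$ is far richer, and verifying $\lambda'\prec_{s_2(O)}\nu$ after a ball-balayage requires checking infinitely many subharmonic test functions simultaneously; no single scalar Lyapunov functional is known to control this. So both the existence of a valid step and the convergence of the greedy iteration are unproven, and these are exactly the substance of the proposition.

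The paper circumvents this entirely by a soft convexity/duality argument rather than a constructive one. It considers the set $\Phi$ of all laws $F_n(\P)$ of \emph{spherical martingales} in $O$ started from $\mu$ (equivalently, laws $B_{\tau_n}$ for iterated sphere-hitting times with $\tau_n\le\tau_O$), shows $\Phi$ is convex, and proves that $\nu$ lies in the weak$^*$ closure of $\Phi$ by Hahn--Banach: if a (weighted) Lipschitz function $g$ separated $\nu$ from $\Phi$, then the value function $\hat g(x)=\lim_n\inf\{\E[g(F_n)]: F_0=x\}$ is shown (Lemma~\ref{shenvelope}, the Bu--Schachermayer-type envelope lemma) to be the maximal subharmonic minorant of $g$ on $O$, and the chain $a\ge\int g\,d\nu\ge\int\hat g\,d\nu\ge\int\hat g\,d\mu\ge b>a$ contradicts $\mu\prec_{s_2(O)}\nu$. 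This replaces your unresolved ``does the greedy scheme reach $\nu$?'' question with an identification of the reachable set as everything the subharmonic order permits. If you want to salvage your approach, you would need either to prove the envelope identity (at which point you have reproduced the paper's argument) or to find a genuinely multidimensional substitute for the one-dimensional potential-function bookkeeping, which is not available here.
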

\begin{remark} For the proposition, we need not assume $O$ is regular since the proof clearly implies that if $\mu, \nu \in \P_c(O)$ are such that $\mu \prec_{s(O)} \nu$, then necessarily $\cal{T}_O(\mu,\nu) \neq \emptyset$.

\end{remark}
To prove the proposition, we first introduce the notion of {\em spherical martingales}.
\begin{definition}
Let $U$ be the uniform probability measure on the unit sphere  $S^{d-1}$ in $\R^d$. Let $(X_i)^{\infty}_{i=1}$ be i.i.d random variables on some probability space $(\Omega, \P)$, whose distribution is $U$. A stochastic process $(F_n)_{n \ge 0}$ is called a {\em spherical martingale (valued) in $O$} if there is an associated sequence  of bounded measurable functions 
$$r_n:  \R^d \times \underbrace{ S^{d-1}\times \cdots \times S^{d-1} }_{(n-1)-times} \to \R_+ $$ 
such that 
\begin{align*}
%&F_0 = x \in \R^d\\
F_n (X_1,...,X_n) - F_{n-1} (X_1,...,X_{n-1}) = r_n (F_0, X_1,...,X_{n-1}) \cdot X_n
\end{align*}
and if for every $n \in \N$, $0 \le \lambda \le 1$ and $(X_1,X_2,...,X_n)$,
\begin{align*}
F_{n-1} (X_1,...,X_{n-1}) +  \lambda r_n (F_0, X_1,...,X_{n-1}) X_n \in O.
\end{align*}
\end{definition}
 At each time $n$, a particle splits uniformly onto a surrounding sphere of radius $r_n$.  An important observation is that the  push-forward measure of $\delta_x$ by a spherical martingale $F_n$  has the same law as $B^x_{\tau}$, where the stopping time $\tau$ is defined as follows:  
$\tau_1$ is the first time $B^x$ hits the sphere $S(x, r_1 (x))$ centered at $x$ and with radius $r_1(x)$. If $B_{\tau_1} (\omega) = x_1 \in S(x,  r_1(x))$, then define $\tau_2$ to be the first hitting time  $B^{x_1}$ hits the sphere $S(x_1, r_2 (x, x_1))$. One can then define inductively a sequence of stopping times $\tau_1 \leq \tau_2 \leq ...$ such that $F_n (\P) = {\rm Law}(B^x_{\tau_n})$.

The following lemma is an analogue of a result of Bu-Schachermayer \cite[Proposition 2.1]{BuSc92} about analytic martingales.
\begin{lemma}\label{shenvelope}
%{\red 
Let $O$ be an open set in $\R^d$ and $f : O \rightarrow \R \cup \{-\infty\}$ be an upper semicontinuous function. Then there exists a unique maximal subharmonic function $\hat f$ on $O$ which is dominated by $f$. In other words, $\hat f \le f$ and $g \le \hat f$ for every $g \in {\cal SH}(O)$ with $g \le f$. Furthermore, $\hat f$ can be constructed as follows:\\
For each $n\in \N$, define $f_n$ on $O$ by
\begin{align}\label{edgar}
f_n(x) = \inf \{\E [f(F_n)] : \,&(F_i)^n_{i=0} \text{ is a spherical martingale in $O$ with } F_0 = x  \\ &\text{ and }
F_n(\P) \text{ is compactly supported in } O\}. \nonumber
\end{align}
Then, the sequence $(f_n)^{\infty}_{n=0}$  decreases pointwise to $\hat f$.  
\end{lemma}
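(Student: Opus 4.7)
The plan is to follow the classical envelope argument from potential theory, parallel to the Bu--Schachermayer treatment of the plurisubharmonic case. Monotonicity of $(f_n)$ and the upper bound by $f$ are immediate: $f_0=f$ (via the zero-step martingale), and any $n$-step spherical martingale extends to length $n+1$ by appending a degenerate step with $r_{n+1}\equiv 0$, yielding $f_{n+1}\le f_n\le f$. Hence $\tilde f(x):=\lim_n f_n(x)\in[-\infty,f(x)]$ is well-defined pointwise.

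Next, fix $x\in O$ and $r>0$ with $\overline{B(x,r)}\subset O$, and let $U_{x,r}$ denote the normalized surface measure on $S(x,r)$. The key construction is to concatenate the one-step spherical martingale emitting from $x$ uniformly onto $S(x,r)$ with an $n$-step spherical martingale starting from each landing point $y\in S(x,r)$. Selecting these $n$-step martingales $\epsilon$-optimally and measurably in $y$ (handled by discretizing the radii functionals and taking a countable infimum, or by a standard measurable $\epsilon$-selection theorem), the concatenation is an $(n{+}1)$-step spherical martingale from $x$, giving
\[
f_{n+1}(x)\le \int_{S(x,r)} f_n(y)\,dU_{x,r}(y)+\epsilon.
\]
Since $f$ is bounded above by a constant $C$ on the compact sphere $S(x,r)$, so are all $f_n$, and monotone convergence applied to the nonnegative increasing sequence $C-f_n$ lets me pass $n\to\infty$ and $\epsilon\to 0$ to conclude that $\tilde f$ satisfies the sphere mean-value inequality on every sphere relatively compact in $O$.

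I then set $\hat f(x):=\limsup_{y\to x}\tilde f(y)$, the upper semicontinuous regularization of $\tilde f$. By construction $\hat f$ is USC, and a standard argument (Fatou's lemma on $C-\tilde f$ together with $\hat f\ge \tilde f$ pointwise) propagates the sphere mean-value inequality from $\tilde f$ to $\hat f$, so $\hat f\in{\cal SH}(O)$. Since $f$ is itself USC and $\tilde f\le f$, taking $\limsup$ yields $\hat f\le f$.

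For maximality, let $g\in{\cal SH}(O)$ with $g\le f$. The defining sphere mean-value property of subharmonic functions says precisely that $g(F_i)$ is a submartingale along any spherical martingale $(F_i)_{i=0}^n$ in $O$, so $g(x)\le\E[g(F_n)]\le\E[f(F_n)]$. Taking the infimum over admissible martingales gives $g\le f_n$ for every $n$, hence $g\le\tilde f$; since $g$ is USC this upgrades to $g\le\hat f$. Uniqueness is then immediate. The main technical obstacle I anticipate is the measurable $\epsilon$-optimal selection used in the concatenation step, together with careful bookkeeping when $\tilde f$ takes the value $-\infty$; otherwise the proof follows well-worn envelope-theoretic lines.
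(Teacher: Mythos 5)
Your proof is correct and rests on the same envelope-theoretic idea as the paper's (which in turn follows Bu--Schachermayer), but the two arguments are organized differently. The paper replaces the $n$-step martingale definition by the equivalent one-step recursion $f_n(x)=\inf_{r}\int f_{n-1}(x+ry)\,dU(y)$ (asserted without proof), shows by induction with Fatou's lemma that each $f_n$ is upper semicontinuous --- so the decreasing limit is automatically USC and no regularization is needed --- and then obtains subharmonicity of $\hat f$ by monotone convergence and maximality by the same one-step induction. You instead work directly with the martingale formulation: your concatenation/$\epsilon$-selection step is precisely the nontrivial half of the equivalence the paper takes for granted, and your maximality argument via the submartingale property of $g(F_i)$ is a global version of the paper's one-step induction. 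Your route leaves two loose ends worth closing. First, before the final regularization you integrate $f_n$ over spheres without knowing $f_n$ is measurable; either prove USC of each $f_n$ inductively as the paper does (Fatou applied to $C-f_{n-1}$ on the relatively compact set $\bigcup_{k}\{x_k+r\overline B\}$), or make your countable-discretization remark precise so that $f_n$ is a countable infimum of measurable functions of $x$. Second, the lemma asserts that $(f_n)$ decreases pointwise \emph{to} $\hat f$, whereas you produce $\hat f=\tilde f^{*}$; add the one line that, since $\hat f\in{\cal SH}(O)$ and $\hat f\le f$, your own maximality step applied to $g=\hat f$ gives $\hat f\le f_n$ for all $n$, hence $\hat f\le\tilde f\le\hat f$, so the regularization is in fact vacuous and the pointwise limit itself is the maximal subharmonic minorant. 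With those two remarks your argument is complete.
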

\begin{proof} 
%{\red 
We will use an equivalent form of \eqref{edgar}:
\begin{align*}
\text{  $f_0 = f$ }
\quad  \text{and for $n \geq 1$},\quad 
f_n (x) = \text{\rm inf } \bigg\{ \int f_{n-1} (x + r y) \,dU(y) \bigg\}
\end{align*}
where the infimum is taken over all $r \geq 0$ such that  $\{ x + r \overline{B}\} \subset O$. %{\red 
Here, $\{ x + r \overline{B}\} $ denotes  the closed ball of radius $r$ around $x$. 

First note that the sequence $(f_n)^{\infty}_{n=0}$ is decreasing. To show the upper-semicontinuity of $\hat{f}$, we proceed inductively and assume $f_{n-1}$ is upper semicontinuous. If $(x_k)^{\infty}_{k=0}$ in $O$ is such that $\lim_{k \rightarrow \infty} x_k = x_0$, and $r \geq 0$ is such that  $\{ x_0 + r \overline{B}\} \subset O$,  where $\overline{B}$ is the closed unit ball, then there is $k_0$ such that $\{ x_k + r \overline{B}\} \subset O$ for $k \geq k_0$. The upper semicontinuous function $f_{n-1}$ is bounded above on the relatively compact set 
$\cup^{\infty}_{k=k_0} \{ x_k + r \overline{B}\}$ and, for every $z \in \overline{B}$,
$f_{n-1} (x_0 + r z) \geq \limsup_{k \rightarrow \infty} f_{n-1} (x_k + r z)$. Hence by Fatou's lemma, \begin{align*}
 \int f_{n-1} (x_0 + r y) \,dU(y) \geq \limsup_{k \rightarrow \infty}  \int f_{n-1} (x_k + r y) \,dU(y) 
 \geq \limsup_{k \rightarrow \infty} f_n (x_k).
 \end{align*}
Thus $f_n (x_0) \geq \displaystyle\limsup_{k \rightarrow \infty} f_n(x_k)$, hence showing that $f_n$ and consequently $\hat f$ is upper-semicontinuous.

Let now $g$ be a subharmonic function on $O$ with $g \leq f$. Again, inductively, assuming  that $g \leq f_{n-1}$, then for $\{ x_0 + r \overline{B}\} \subset O$, 
\begin{align*}
\int f_{n-1} (x_0 + r y) \,dU(y) \geq  \int g (x_0 + r y) \,dU(y)\, \geq \,g(x_0),
\end{align*}
and so $f_n (x_0) \geq g (x_0)$. Hence $\hat{f} \geq g$. Finally, for $\{ x_0 + r \overline{B}\} \subset O$,  we get from monotone convergence
\begin{align*}
\hat{f} (x_0) = \lim_{n \rightarrow \infty} f_n (x_0) \leq \lim_{n \rightarrow \infty} \int f_{n-1} (x_0 + r y) \,dU(y) =  \int \hat{f} (x_0 + r y) \,dU(y).
\end{align*}
This shows that $\hat{f}$ is subharmonic, and the proof of the lemma is complete.
\end{proof}

Let now ${\rm Lip}^*(O)$ be the space of all finite measures in $O$ with finite first moments.
\begin{lemma}\label{denseness} Let $\mu, \nu \in \P_2(O)$ be such that $\mu \prec_{s_2(O)}\nu$, and consider the following subset of $\P_2(O)$,
\begin{align*}
\Phi = \{ F_n(\P) : (F_i)^n_{i=0} \,\hbox{is a spherical martingale valued in $O$ with $F_0\sim \mu$\}.}
\end{align*}
Set $\tilde \nu:= (1+|x|)\nu$ and 
$$
\tilde \Phi = (1+|x|)\Phi:= \{\tilde \sigma \,|\, \tilde \sigma = (1+|x|)\sigma \hbox{ for some  $\sigma \in \Phi$\}.}
$$
Then, $\tilde \nu$ is in the weak$^*$-closure of $\tilde \Phi$ in ${\rm Lip}^*(O)$.
\end{lemma}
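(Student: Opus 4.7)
The plan is a Hahn--Banach separation argument that reduces to a direct application of Lemma~\ref{shenvelope} together with the hypothesis $\mu \prec_{s_2(O)} \nu$.

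\textbf{Convexity of $\tilde\Phi$.} Given two spherical martingales $F^1,F^2$ in $O$ with $F_0^i\sim\mu$ and $\lambda\in(0,1)$, I splice them into a single spherical martingale $F$ by taking a dummy first step with $r_1\equiv 0$ (so $F_1=F_0$), Borel-partitioning the sphere $S^{d-1}$ into $A\sqcup A^c$ of uniform masses $\lambda,1-\lambda$, and defining the subsequent radii as those of $F^1$ on $\{X_1\in A\}$ and of $F^2$ otherwise. By independence of the $X_k$, the conditional law of $F$ on each branch coincides with $F^i$'s; hence $\lambda F^1_{n_1}(\P)+(1-\lambda)F^2_{n_2}(\P)\in\Phi$, so $\Phi$ and $\tilde\Phi$ are convex.

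\textbf{Separation and reduction to a subharmonic envelope.} Suppose for contradiction that $\tilde\nu\notin\overline{\tilde\Phi}^{w*}$. Then Hahn--Banach produces a continuous $g$ in the predual defining the weak$^*$ topology on ${\rm Lip}^*(O)$ (of at-most-linear growth, $|g(x)|\le C(1+|x|)$) and a real $a$ with $\int g\,d\tilde\nu< a<\inf_{\tilde\sigma\in\tilde\Phi}\int g\,d\tilde\sigma$. Setting $h(x):=g(x)(1+|x|)$, continuous with quadratic growth $|h(x)|\le C(1+|x|)^2$, this rewrites as
\begin{equation*}
\int h\,d\nu \;<\; \inf\{\E[h(F_n)]:F^n\text{ spherical martingale in }O,\ F_0\sim\mu\}.
\end{equation*}
Lemma~\ref{shenvelope} applied to $h$ yields a subharmonic envelope $\hat h\le h$ with $\hat h=\lim_n f_n$, and a measurable-selection argument (picking near-optimal radii $r_k(\cdot)$ as Borel functions of the starting point, e.g.\ via Kuratowski--Ryll-Nardzewski) together with monotone convergence gives
\begin{equation*}
\int \hat h\,d\mu \;=\; \inf_{n,\,F^n,\,F_0\sim\mu}\E[h(F_n)].
\end{equation*}

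\textbf{Contradiction via the subharmonic order.} Since $\hat h\in{\cal SH}(O)$ satisfies $\hat h\le h\le C(1+|x|^2)$, it is admissible in $\mu\prec_{s_2(O)}\nu$, so
\begin{equation*}
\int \hat h\,d\mu \;\le\; \int \hat h\,d\nu \;\le\; \int h\,d\nu,
\end{equation*}
contradicting the separating inequality above.

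The principal obstacle I anticipate is reconciling Lemma~\ref{shenvelope}'s compact-support hypothesis on $F_n(\P)$ with the possibility that $\mu\in\P_2(O)$ has unbounded support (or $O$ is unbounded): the remedy is to approximate $\mu,\nu$ by compactly supported $\mu_k\prec_{s_2(O)}\nu_k$, run the argument for each pair, and pass to the limit using tightness from the uniform second-moment bound and upper semi-continuity of $\hat h$. The measurable-selection and convexity steps are otherwise routine once spherical martingales are parameterized by their radius functions.
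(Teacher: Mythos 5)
Your proof follows essentially the same route as the paper's: convexity of $\Phi$ by branching spherical martingales after a dummy first step, Hahn--Banach separation in ${\rm Lip}^*(O)$ against a Lipschitz functional $f$, passage to the subharmonic envelope $\hat h$ of the quadratically growing function $h=(1+|x|)f$ via Lemma~\ref{shenvelope}, and the contradiction $\int\hat h\,d\mu\le\int\hat h\,d\nu\le\int h\,d\nu$ supplied by $\mu\prec_{s_2(O)}\nu$. The additional care you take (general $\lambda$ in the convexity step rather than only the hemisphere midpoint trick, and the measurable selection needed to relate $\int\hat h\,d\mu$ to the infimum of $\E[h(F_n)]$ over martingales started from $\mu$) fills in details the paper leaves implicit, but the argument is the same.
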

\begin{proof}
Observe first that $\Phi$ is convex. Indeed, if $(F_i')^n_{i=0}$ and $(F_i'')^m_{i=0}$ are two spherical martingales  with $F_0' \sim \mu$, $F_0'' \sim \mu$, 
we may assume $n=m$, then define a spherical martingale $(F_i)^{n+1}_{i=0}$ by letting $F_0 = F_1 \sim \mu$ 
and for $1 \leq i \leq n$,
\begin{align*}
F_{i+1} (X_1,X_2,...,X_{i+1}) = 
\begin{cases}
F_i' (X_2,...,X_{i+1}) \text{\, if \,$X_1$ is in the upper hemisphere,}\\
F_i'' (X_2,...,X_{i+1}) \text{\, if \,$X_1$ is in the lower hemisphere.}
\end{cases}
\end{align*}
Clearly $F_{n+1} (\P) = \{F_{n}' (\P) + F_{n}'' (\P)\} / 2$ and hence $\Phi$ is convex, and therefore $\tilde \Phi = (1+|x|)\Phi$ is convex in ${\rm Lip}^*(O)$.

If now the statement of the lemma were false, then by the Hahn-Banach theorem we can find a Lipschitz function $f$ on $O$ and real numbers $a < b$ such that
\begin{align*}
\int g \,d\nu \leq a, \, \text{while}\,  \int  g \circ F_n \,d\P  \geq b \,\text{ for every $F_n(\P) \in \Phi$},
\end{align*}
where $g(x) = (1+|x|)f(x)$. But since every element in $\Phi$ has initial distribution $\mu$, then by Lemma \ref{shenvelope} and \eqref{edgar}, we have $\int \hat{g} \,d\mu \geq b$ and therefore $a \geq \int g \,d\nu \geq \int \hat{g} \,d\nu \geq \int \hat{g} \,d\mu \geq b$, which is a contradiction.
\end{proof}
\begin{proof}[\bf Proof of Proposition~\ref{prop1}]
By Lemma \ref{denseness}, we have a sequence $\{\nu_n\} \subset \Phi$ such that $\int |x|^2 d\nu_n(x) \rightarrow \int |x|^2 d\nu(x)$. We know that $\nu_n = {\rm Law}(B_{\tau_n})$ for a sequence of stopping times ${\tau_n}$ and a Brownian motion $B$ with initial law $\mu$.  Hence in particular $\E \tau_n = \E |B_{\tau_n}|^2 =  \int |x|^2 d\nu_n(x) \leq V$ for some constant $V$ and for all $n$. The sequence $({\tau_n})$ is then tight, and it is standard that it has a convergent subsequence to a --possibly randomized-- stopping time $\tau$ in such a way that $\E f(B_{\tau_k}) \rightarrow \E f(B_\tau)$ for every $f$ continuous and bounded function on $\R^d$. (see for example \cite{BaCh74} or  \cite{bch}). In other words, ${\rm Law}(B_{\tau_k}) \rightarrow {\rm Law}(B_\tau)$, and therefore, $B_\tau \sim \nu$. Let $\tau_O$ be the first exit time of $(B_t)$ from $O$ and note that $\tau_k = \tau_k \wedge \tau_O$ by the definition of $\Phi$, and therefore $\tau$ also satisfies $\tau= \tau \wedge \tau_O$. 
 Notice also that $\E \tau = \E |B_{\tau}|^2 \leq V$ as well, hence the martingale $(B_{\tau \wedge t})_{t \ge 0}$ is bounded in $L^2$. 
 \end{proof}

Lastly, we prove the duality result announced in the introduction. 

\begin{proof}[\bf Proof of Theorem \ref{prop: no gap SMT}] By a standard argument, it is enough to prove the theorem for continuous cost $c$, so let us assume this. Clearly, the left-hand side is smaller than or equal to  the right-hand side since for every $\pi \in {\rm SMT}(\mu, \nu)$ (if exists) and for every $(\alpha,\beta, p) \in {\cal K}_c$, we have $\int p(x, y) d\pi (x, y) \geq 0$.  

For the reverse inequality, we first note that Kantorovich duality for the standard optimal transport problem with a continuous cost $c(x, y) - p(x, y)$ yields
\begin{align*}
  \sup_{ (\alpha, \beta, p) \in {\cal K}_c} \left(  \int \beta d\nu - \int \alpha d\mu  %\int \alpha d\mu + \int \beta d\nu 
   \right)
  = \sup_{p \in \Psi}  \inf_{\pi \in \Pi(\mu, \nu) } \int (c-p) d\pi. 
  \end{align*}
Now it is standard to apply a min-max theorem (see e.g. \cite[Theorem 2]{BeHePe11}) to interchange the order of inf and sup, and thereby  obtain
  \begin{align}\label{eq: inf sup 2}
  \sup_{ (\alpha, \beta,p) \in {\cal K}_c} \left( \int \beta d\nu - \int \alpha d\mu  \right)= \inf_{\pi \in \Pi(\mu, \nu)} \sup_{p \in \Psi} \int (c-p) d\pi.  
 \end{align}
Note that the supremum over $p$  can be finite only when $\int p(x, y) d\pi (x, y) \ge 0$, %$\int p(x, y) d\pi (x, y) = 0$, 
since otherwise we can replace $p$ with $\lambda p$ for some $\lambda >0$, %$\lambda \ne 0$, 
making the value of the integral $\int (c - p) d\pi$ arbitrarily large. Therefore, by Theorem \ref{thm:main2}, the infimum in \eqref{eq: inf sup 2} can be restricted  to $\pi \in {\rm SMT}(\mu, \nu)$, and we obtain
\begin{align*}
  \sup_{ (\alpha, \beta,p) \in {\cal K}_c} \left(  \int \beta d\nu - \int \alpha d\mu 
  \right)  = \inf_{\pi \in {\rm SMT}(\mu, \nu) } \sup_{p \in \Psi} \int (c - p )d\pi.
\end{align*}
Finally, since $0 \in \Psi$, the last expression is greater than or equal to 
$$\inf \left\{ \int c(x, y) d\pi \ | \ \pi \in {\rm SMT}(\mu, \nu) \right\}.$$
Together with Corollary \ref{same}, 
 this completes the proof of the duality for the subharmonic martingale optimal transport problem. 
\end{proof}

 \section{Appendix: The radially symmetric monotonicity principle}\label{rmonoproof}
 {
In this section we use the dual attainment result \cite{GKP-Monge} to provide a proof of  Proposition~\ref{rmonoprin}.  
\begin{proof}[\bf Proof of Proposition \ref{rmonoprin}] 
 Let $O$ be an open ball containing $\supp(\mu+\nu)$.
First,  we can assume without loss of generality that $\supp \mu \cap \supp \nu =\emptyset$. 
Indeed, due to the assumption $\mu \wedge \nu =0$ and $\mu (\partial \supp \mu) =0$, for each $\mu$-a.e. $x, x'$, we can consider the restriction of $\mu$ to $\mu^+$ on an open set containing $x, x'$ outside  $\supp \nu$ and let $\nu^+$ be the corresponding target measure under the stopping time $\tau$.  Then   $\supp \mu^+ \cap \supp \nu^+ =\emptyset$. It will be sufficient to prove  the desired conclusion  of the proposition  for those $x, x' \in \supp \mu^+$, $\supp \phi_y \subset \supp \nu^+$, and $y' \in \supp \nu^+$. 

Now, from \cite[Theorem 4.1 and the proof of Theorem 7.1 (1)]{GKP-Monge} we have the existence of the optimizers for the dual problem. Here, note that the theorems in \cite{GKP-Monge} can be used because $\supp \mu \cap \supp \nu =\emptyset$,  the function $c(x, y) =\pm |x-y|^\alpha$,  $\alpha >0$,  is $C^2$ for those $x, y \in \supp \mu \times \supp \nu$; this cost  can be modified  by adding an appropriate subharmonic function  $h(y)$, with large $\Delta h$, to an equivalent  cost   $y \mapsto c(x, y) + h(y)$ 
with $0\le  \Delta_y [ c(x, y) + h(y)]$ which is $C^2$ on $\supp \mu \times \supp \nu$. Then, as in the  proof of \cite[Theorem 7.1 (1)]{GKP-Monge} this can be extended to a $C^2$ subharnonic function on $O\times O$ without changing the optimal solution.  The result \cite[Theorem 4.1]{GKP-Monge} can be applied to this cost to get the dual optimizers, first $\beta \in H_0^1(O)\cap LSC(O)$, then   
\begin{align}\label{eqn:J}
J(x, y) :=\sup_{\xi \le \tau_O} [  \beta (B^y_\xi) - c(x, B^y_\xi) ],
\end{align}
where $\xi$ is a stopping time for Brownian motion, and $\tau_O$ is the exit time from the domain $O$. Set $\alpha(x) = J(x,x)$ and $p(x,y) = J(x,x) - J(x, y)$.  
 
From the radial symmetry of $\mu$ and $\nu$, and the expression of the dual problem, we can assume that the dual optimizer $\beta$ is radially symmetric. To see this, recall that $\mathfrak{M}$ is the group of $d\times d$ orthogonal matrices and $\mathcal{H}$ is the Haar measure on $\mathfrak{M}$. Then, for each $M \in \mathfrak{M}$, 
\begin{align*}
 J(Mx, My) =  \alpha (Mx) - p(Mx, My) =\sup_{M\xi \le \tau_O} \left[  \beta (B^{My}_{M\xi}) - c(Mx, B^{My}_{M\xi}) \right].
\end{align*}
Since both Brownian motion and the cost $c(x, y) = \pm|x-y|^\alpha$ are invariant under the orthogoal group (isometries of $\R^d$) we see that for 
\begin{align*}
 \bar J (x, y) = \int_{M\in \mathfrak{M}} J (Mx, My) d\mathcal{H}(M) \quad \hbox{and}  \quad  \bar \beta(y) = \int_{M\in \mathfrak{M}} \beta (My) d\mathcal{H}(M), 
\end{align*}
it holds that
\begin{align*}
 \bar J (x, y) = \sup_{\xi \le \tau_O} \left[ \bar\beta (B^y_\xi) - c(x, B^y_\xi)\right]. 
\end{align*}
Notice that $\bar \beta$, $\bar \alpha(x) := \bar J(x,x)$ and  $\bar p(x, y) := \bar J(x,x) - \bar J(x,y)$ are admissible and they have the same optimal dual value as $\int \beta d\nu - \int \alpha d\mu$, thus they are dual optimizers. 

The rest of the proof is similar to that of  \cite[Theorem B.1]{GKP-Monge}. We will use the radial symmetry of $\beta$. We notice that the desired conclusion of our theorem is equivalent to the following:

\begin{claim} Let $\pi$ be the measure with $(B_0, B_\tau) \sim \pi$. Consider a randomized stopping time $\xi$ with $B_0 \sim \mu$, $(B_0, B_\xi) \sim \sigma$ where the disintegration $\sigma(dx, dy) = \sigma_x (dy) \mu (dx)$ satisfies 
\begin{equation} \hbox{$0 \prec_{s(O)} \sigma_x \prec_{s(O)}  \pi_x$ 
for  each $\mu$-a.e. $x$,}
\end{equation}
which means that $\sigma \le \tau$, and $\tau-\sigma$ is a (randomized) stopping time.

Then, for $\pi$-a.e.  $(x', y')$ and $\sigma$-a.e. $(x, y)$, it holds that  if   $|y|=|y'|$,  then the stopping time $\tau-\xi$ restricted to paths with $B_\xi=y$ satisfies
  \begin{align*}
  c(x', y' )  + \mathbb{E}\big[c(x, B^{y}_{\tau-\xi} )\big] \le 
   \mathbb{E}\big[c(x', B^{y'}_{\tau'}) \big] + c(x, y), 
  \end{align*}
 for any stopping time $\tau'$ such that $B^{y'}_{\tau'} \sim \phi_{y'} \cong_R \psi_y \sim B^y_{\tau -\xi}  .$ 
\end{claim}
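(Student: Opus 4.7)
The plan is to combine the dual attainment result of \cite{GKP-Monge} (just invoked in the excerpt) with the radial symmetry of the symmetrized dual potential $\bar\beta$ constructed above, and extract the claimed inequality through a complementary-slackness / Snell-envelope computation. The approach parallels \cite[Theorem B.1]{GKP-Monge}, the new ingredient being that $\bar\beta$ is now radial, so integrals of $\bar\beta$ against $R$-equivalent measures coincide and $\bar\beta(y)=\bar\beta(y')$ whenever $|y|=|y'|$.

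First I would record the sharp dual relations. Primal--dual optimality of $\tau$ and $(\bar\alpha,\bar\beta,\bar p)$ forces equality in the dual constraint on $\supp\pi$, so
\[
c(x',y')=\bar\beta(y')-\bar J(x',y') \quad\text{for $\pi$-a.e.\ $(x',y')$,}
\]
while admissibility gives $c(x,y)\ge\bar\beta(y)-\bar J(x,y)$ everywhere, and in particular for $\sigma$-a.e.\ $(x,y)$. Next, because $\tau$ is optimal for $\bar J(x,x)$ and $\xi\le\tau$, the strong Markov property together with the Snell-envelope identification \eqref{eqn:J} (now relative to $\bar\beta$) implies that the conditional stopping time $\tau-\xi$ is itself optimal for $\bar J(x,y)$ starting from $y$. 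Concretely, $\bar J(x,B^x_t)$ is a supermartingale, a martingale up to $\tau$, and equals its obstacle $\bar\beta(\cdot)-c(x,\cdot)$ at time $\tau$, so optional sampling at $\xi$ and $\tau$ yields
\[
\bar J(x,y)=\E\bigl[\bar\beta(B^y_{\tau-\xi})-c(x,B^y_{\tau-\xi})\bigr],
\]
i.e.\ $\E[c(x,B^y_{\tau-\xi})]=\E[\bar\beta(B^y_{\tau-\xi})]-\bar J(x,y)$. For the comparison term, the very definition of $\bar J(x',y')$ as a supremum over stopping times $\tau'\le\tau_O$ gives the reverse one-sided bound
\[
\E[c(x',B^{y'}_{\tau'})]\ge\E[\bar\beta(B^{y'}_{\tau'})]-\bar J(x',y').
\]

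Substituting the two equalities into the LHS of the claim and the two inequalities into the RHS, the $\bar J$-terms cancel pairwise and one is left with
\[
\mathrm{RHS}-\mathrm{LHS}\ \ge\ \E[\bar\beta(B^{y'}_{\tau'})]+\bar\beta(y)-\bar\beta(y')-\E[\bar\beta(B^y_{\tau-\xi})].
\]
The hypothesis $|y|=|y'|$ together with radial symmetry of $\bar\beta$ yields $\bar\beta(y)=\bar\beta(y')$, and the $R$-equivalence $\phi_{y'}\cong_R\psi_y$ together with radiality of $\bar\beta$ yields $\int\bar\beta\,d\phi_{y'}=\int\bar\beta\,d\psi_y$, so the two remaining expectations coincide. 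Hence $\mathrm{RHS}\ge\mathrm{LHS}$, which is the claim.

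The main technical obstacle is the rigorous justification of the Snell-envelope identity $\bar J(x,y)=\E[\bar\beta(B^y_{\tau-\xi})-c(x,B^y_{\tau-\xi})]$ for $\sigma$-a.e.\ $(x,y)$. This requires a jointly measurable disintegration of $\tau$ along $B^x_\xi$ and optional sampling applied to the Snell supermartingale $\bar J(x,B^x_t)$ at the stopping times $\xi$ and $\tau$; the boundedness of $\bar\beta$ on the bounded domain $O$ and the bound $\tau\le\tau_O$ supply the uniform integrability needed. The upstream steps -- the reduction to $\supp\mu\cap\supp\nu=\emptyset$, the $C^2$-subharmonic modification of $c$ on $O\times O$, and the Haar-measure symmetrization producing $\bar\beta$ -- are already carried out in the excerpt, so the steps above are what remains.
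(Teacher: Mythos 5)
Your argument is correct and follows essentially the same route as the paper's proof: the two equalities you record (the complementary-slackness identity $c(x',y')=\bar\beta(y')-\bar J(x',y')$ on $\supp\pi$ and the Snell-envelope/optional-sampling identity $\E[c(x,B^y_{\tau-\xi})]=\E[\bar\beta(B^y_{\tau-\xi})]-\bar J(x,y)$) are exactly the paper's equations \eqref{yes1}, \eqref{yes2} and \eqref{OK2} (the latter two being cited from \cite[Theorem 4.7]{GKP-Monge}), the two inequalities are \eqref{OK1} and the admissibility bound $J(x,y)\ge\beta(y)-c(x,y)$, and the cancellation of the $\bar J$-terms followed by the use of radiality of $\bar\beta$ is the paper's concluding computation. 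The technical point you flag as the main obstacle is precisely what the paper disposes of by invoking \cite[Theorem 4.7 (4.4)]{GKP-Monge}.
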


To prove this claim notice that from \cite[Theorem 4.7]{GKP-Monge}, we have for $\pi^*$-a.e. $(x', y')$ and $\sigma$-a.e. $(x, y)$, 
\begin{align}  
 J(x', y') & = \beta (y') - c(x', y'), \label{yes1}\\    
 J(x, B^{y}_{\tau-\xi}) = J(x, B^{x}_{\tau})& = \beta (B^{x}_{\tau}) -  c(x, B^{x}_{\tau})=\beta (B^{y}_{\tau-\xi}) -  c(x, B^{y}_{\tau-\xi}). \label{yes2}
\end{align}
On the other hand, 
from the definition  \eqref{eqn:J}, we have  
\begin{align}\label{OK1}
J(x', y')   \ge  \mathbb{E}\Big[  \beta (B^{y'}_{\tau'}) - c(x', B^{y'}_{\tau'} ) \Big] =  \mathbb{E}\Big[  \beta (B^{y}_{\tau-\xi}) - c(x', B^{y'}_{\tau'}) \Big] 
\end{align}
 where the first inequality holds for any $\tau'$ and the second one holds  in particular for those with $B^{y'}_{\tau'} \sim \phi_{y'} \equiv_R \psi_y \sim B^{y}_{\tau-\xi}$ due to radial symmetry of $\beta$.
Notice that from \cite[Theorem 4.7 (4.4)]{GKP-Monge} we have
\begin{align}\label{OK2}
\mathbb{E}\Big[ J (x, B^{y}_{\tau-\xi} ) \Big]  = J(x, y).
\end{align}
Moreover, 
\begin{align*}
J(x, y) &\ge \beta (y) - c(x, y)
= \beta(y') - c(x, y) 
\end{align*}
where the last equality uses the radial symmetry of $\beta$ and the fact that $|y|=|y'|$.

Taking the expectation in \eqref{yes2}, we see that the left hand sides of (\ref{yes1}), (\ref{yes2})
are equal to the left hand sides of inequalities (\ref{OK1}) and (\ref{OK2}). Now, subtract the sum of the two equations from the sum of the two inequalities and cancel the terms  $\beta(y)$ and $\mathbb{E}\big[  \beta(B^{y}_{\tau-\xi})\big]$, to obtain % the inequality
\begin{align*}
  0 \ge  - c(x, y) -  \mathbb{E}\Big[  c(x', B^{y'}_{\tau'})\Big]
  + c(x', y') + \mathbb{E}\Big[  c(x, B^{y}_{\tau-\xi})  \Big], 
\end{align*}
hence completing the proof. 
 \end{proof}

}

\bibliographystyle{plain}
%\phantomsection % so that hyper ref jumps to the right place
%\cleardoublepage % To get correct page numbering in the ToC
%\addcontentsline{toc}{chapter}{Bibliography} % To add this entry in the ToC{ 
%\bibliography{biblio} %make the bibliography if using BibTex

\begin{thebibliography}{40}



\bibitem{BaCh74}
J. R.~Baxter and R. V.~Chacon,
\newblock  Potentials of stopped distributions.
\newblock  {\em Illinois J. Math.}, 18:649--656, (1974). %MR358960
\bibitem{bch}
M.~Beiglb{\"o}ck, A.M.G.~Cox, and M.~Huesmann,
\newblock Optimal transport and Skorokhod embedding.
\newblock {\em Inventiones mathematicae},  Volume 208, Issue 2 (2017) 327--400.

\bibitem{BeHePe11}
M.~Beiglb{\"o}ck, P.~{\HL}, and F.~Penkner, 
\newblock Model-independent bounds for option prices--a mass transport approach.
\newblock {\em Finance and Stochastics}, Volume 17 (2013) 477--501.

\bibitem{BeHeTo}
M.~Beiglb{\"o}ck, P.~{\HL}, and N.~Touzi, 
\newblock Monotone martingale transport plans and Skorohod Embedding.
\newblock {\em Stochastic Processes and their Applications}, Volume 127, Issue 9 (2017) 3005--3013.

\bibitem{BeJu16}
M.~Beiglb{\"o}ck and N.~Juillet, 
\newblock On a problem of optimal transport under marginal martingale constraints.
\newblock {\em Ann. Probab}, Volume 44 (2016)  42--106.

\bibitem{BeNuTo17}
M.~Beiglb{\"o}ck, M.~Nutz, and N.~Touzi,
\newblock Complete duality for martingale optimal transport on the line.
\newblock  {\em Ann. Probab.}, Volume 45, Number 5 (2017) 3038--3074.



\bibitem{br}
Y.~Brenier,
\newblock Polar factorization and monotone rearrangement of vector-valued functions.
\newblock {\em Communications on pure and applied mathematics}, Volume 44, Issue 4 (1991) 375--417.

\bibitem{BuSc92}
S.~Bu and W.~Schachermayer,
\newblock Approximation of Jensen measures by image measures under holomorphic functions and applications.
\newblock {\em Trans. Amer. Math. Soc.}, Volume 331 (1992) 585--608.

\bibitem{ChWa76}
R. V.~Chacon and J. B.~Walsh,
\newblock One-dimensional potential embedding. 
\newblock In {\em S{\'e}minaire de Probabilit{\'e}s,  X.  Lecture Notes in Math.}, Vol. 511. Springer, Berlin (1976),  19--23. MR445598.

\bibitem{Ch59}
G.~Choquet,
\newblock Forme abstraite du th{\'e}or{\`e}me de capacitabilit{\'e}.
\newblock {\em Ann. Inst. Fourier. Grenoble}, Volume 9 (1959) 83--89. 

\bibitem{COT}
A.M.G.~Cox, J. \OB, and N.~Touzi,
\newblock The Root solution to the multi-marginal embedding problem: an optimal stopping and time-reversal approach.
\newblock %http://arxiv.org/abs/1505.03169 (2015).  
Probability Theory and Related Fields (Open Access), (2018). 

\bibitem{DeTo17}
H.~De March and N.~Touzi,
\newblock Irreducible convex paving for decomposition of multi-dimensional martingale transport plans.
\newblock https://arxiv.org/abs/1702.08298 (2017).

\bibitem{ds1}
Y.~Dolinsky and H.M.~Soner, 
\newblock Martingale optimal transport and robust hedging in continuous time.
\newblock {\em Probab. Theory Relat. Fields}, 160 (2014) 391--427.

\bibitem{ds2}
Y.~Dolinsky and H.M.~Soner, 
\newblock Martingale optimal transport in the Skorokhod space.
\newblock {\em Stochastic Processes and their Applications}, 125(10) (2015) 3893--3931.

\bibitem{Ed85}
G.A.~Edgar, 
\newblock Complex martingale convergence.
\newblock {\em Banach Spaces,
Volume 1166 of the series Lecture Notes in Mathematics}, 38--59.

\bibitem{Ed86}
G.A.~Edgar, 
\newblock Analytic martingale convergence.
\newblock {\em Journal of Functional Analysis},
Volume 69, Issue 2, November 1986, 268--280.

\bibitem{Fa80}
N.~Falkner,
\newblock  On Skorohod embedding in n-dimensional Brownian motion by means of natural stopping times. 
\newblock  In {\em S{\'e}minaire de Probabilit{\'e}s, XIV, volume 784 of Lecture Notes in Math.}, 357--391. Springer, Berlin, (1980). MR580142.

\bibitem{Fo13}
G.B. Folland. 
\newblock Real analysis: modern techniques and their applications.
\newblock John Wiley \& Sons, 2013.

\bibitem{GaHeTo11}
A.~Galichon, P.~Henry-Labordere, and N.~Touzi, 
\newblock A Stochastic Control Approach to No-Arbitrage Bounds Given
  Marginals, with an Application to Lookback Options.
\newblock {\em Ann. Appl. Probab}, Volume 24, Number 1 (2014) 312--336.

\bibitem{gm}
W.~Gangbo and R.J.~McCann, 
\newblock The geometry of optimal transportation.
\newblock {\em Acta Math}, Volume 177, Issue 2 (1996) 113--161. 


\bibitem{GKL2}
N.~Ghoussoub, Y-H. ~Kim, and T. ~Lim, 
\newblock 
Structure of optimal martingale transport in general dimensions.
\newblock {\em Annals of Probability,} Vol. 47, No. 1, (2019) 109-164
%\newblock{\em http://arxiv.org/abs/1508.01806,} (2015).

\bibitem{GKL2}
N.~Ghoussoub, Y-H. ~Kim, and T. ~Lim, 
\newblock 
Optimal Skorokhod Embedding for radially symmetric marginals
\newblock arXiv:1711.02784 

\bibitem{GKP-Monge}
N.~Ghoussoub, Y.-H.~ Kim, and A.Z.~Palmer.
\newblock {A solution to the Monge transport problem for Brownian martingales}.
\newblock arXiv:1903.00527

\bibitem{GKP-Lagrange} N.~Ghoussoub, Y.-H.~ Kim, and A.Z.~Palmer, PDE Methods for Optimal Skorokhod Embeddings, {\em Calculus of Variations and PDE}, 58: 113.: 1-33.


\bibitem{GhMa89}
N.~Ghoussoub and B.~Maurey, 
\newblock Plurisubharmonic martingales and barriers in complex quasi-Banach spaces.
\newblock {\em Annales de l'institut Fourier}, 
Volume 39, Issue 4 (1989) 1007--1060.

\bibitem{GTT}
G.~Guo, X.~Tan, and N.~Touzi, 
\newblock 
Optimal Skorokhod embedding under finitely-many marginal constraints.
\newblock {\em SIAM J. Control Optim}, 54(4), (2016) 2174--2201.



\bibitem{Ho98}
D.~Hobson,
\newblock Robust hedging of the lookback option, 
\newblock {\em Finance and Stochastics},  2 (1998) 329--347.

\bibitem{Ho11}
D.~Hobson, 
\newblock The Skorokhod embedding problem and model-independent bounds for
  option prices.
\newblock In {\em Paris-{P}rinceton {L}ectures on {M}athematical {F}inance
  2010}, volume 2003 of {\em Lecture Notes in Math}, Springer,
  Berlin (2011)  267--318.

\bibitem{HoKl12}
D.~Hobson and M.~Klimmek, 
\newblock Robust price bounds for the forward starting straddle.
\newblock {\em Finance and Stochastics}, Volume 19, Issue 1 (2014) 189--214.


\bibitem{HoNe11}
D.~Hobson and A.~Neuberger, 
\newblock Robust bounds for forward start options.
\newblock {\em Mathematical Finance}, Volume 22, Issue 1 (2012) 31--56.

\bibitem{Johnson}
J. A.~Johnson, 
\newblock Banach spaces of Lipschitz functions and vector-valued Lipschitz functions.
\newblock {\em Bull. Amer. Math. Soc}, Volume 75, Number 6 (1969), 1334--1338.


\bibitem{KTT}
S.~K{\"a}llblad, X.~Tan and N.~Touzi,
\newblock Optimal Skorokhod embedding given full marginals and Azema-Yor peacocks.
\newblock {\em Ann. Appl. Probab}, Volume 27, Number 2 (2017), 686--719.

\bibitem{LY85}
F. Ledrappier and L.-S. Young,
\newblock The Metric Entropy of Diffeomorphisms: Part I: Characterization of Measures Satisfying
Pesin's Entropy Formula.
\newblock {\em Annals of Mathematics}, Second Series, Vol. 122, No. 3 (1985), 509--539.

\bibitem{Lim}
T.~Lim,
\newblock Optimal martingale transport between radially symmetric
marginals in general dimensions.
\newblock https://arxiv.org/abs/1412.3530, (2016).

\bibitem{Mo72}
I.~Monroe,
\newblock On embedding right continuous martingales in Brownian motion.
\newblock {\em The Annals of Mathematical Statistics}, Vol. 43, No. 4, 1293--1311 (1972).

\bibitem{Ob04}
J.~Ob{\l}{\'o}j,
\newblock  The Skorokhod embedding problem and its offspring
\newblock {\em  Probability Surveys}, 1 (2004) 321--392.

\bibitem{ObSi17}
J.~Ob{\l}{\'o}j and P.~Siorpaes,
\newblock Structure of martingale transports in finite dimensions.
\newblock https://arxiv.org/abs/1702.08433 (2017).

\bibitem{ObSp16}
J.~Ob{\l}{\'o}j and P.~Spoida,
\newblock An Iterated Az\'{e}ma-Yor Type Embedding for Finitely Many Marginals.
\newblock {\em Ann. Probab.}, to appear. (2016)


 
 \bibitem{Ro71}
 H.~Rost,
\newblock The Stopping Distributions of a Markov Process.
\newblock {\em Inventiones mathematicae}, 14, 1--16 (1971).



\bibitem{Sk65}
A.V.~Skorokhod,
\newblock  Studies in the Theory of Random Processes (Translated from the Russian
 by  Scripta Technica Inc.). 
 \newblock {\em Addison-Wesley Publishing Co. Inc.}, Reading (1965)

\bibitem{St65}
V.~Strassen,
\newblock The existence of probability measures with given marginals.
\newblock {\em Ann. Math. Statist.}, 36 (1965) 423--439. 

\bibitem{Vi03}
C.~Villani,
\newblock {\em Topics in optimal transportation}, Volume~58, {\em Graduate
  Studies in Mathematics}.
\newblock American Mathematical Society, Providence, RI, 2003.



\bibitem{W29}
J.L.~Walsh,
\newblock The approximation of harmonic functions by harmonic polynomials and by harmonic rational functions.
\newblock {\em Bull. Amer. Math. Soc.} (2) 35 (1929) 499--544.

\end{thebibliography}

% AOS,AOAS: If there are supplements please fill:
%\begin{supplement}[id=suppA]
%  \sname{Supplement A}
%  \stitle{Title}
%  \slink[doi]{10.1214/00-AOASXXXXSUPP}
%  \sdatatype{.pdf}" 
%  \sdescription{Some text}
%\end{supplement}

\end{document}